\theoremstyle{definition}
\theoremstyle{plain}
\date{}
\renewcommand{\theequation}{\arabic{section}.\arabic{equation}}
\newtheorem{Thm}{Theorem}[section]
\newtheorem{Prop}[Thm]{Proposition}
\newtheorem{Lemma}[Thm]{Lemma}
\newtheorem{Cor}[Thm]{Corollary}
\newcommand{\p}{\partial}
\newcommand{\dis}{\displaystyle}
\newcommand{\N}{{\mathbb N}}
\newcommand{\R}{{\mathbb R}}
\newcommand{\ep}{\varepsilon }
\newcommand{\2}{\frac{1}{2} }
\newcommand{\tra}{{\sf T}}
\newcommand{\interface}{\Sigma}
\def\text#1{\mbox{#1 }}
\title{\bf Mathematical analysis of the \\velocity extension level set method}
\author{Dieter Bothe\footnote{Department of Mathematics, Technische Universit\"at Darmstadt, Germany. E-mail:  bothe@mma.tu-darmstadt.de}
 \,\,and \,\, Kohei Soga\footnote{Department of Mathematics, Faculty of Science and Technology, Keio University, Japan. E-mail:  soga@math.keio.ac.jp
}}
\begin{document}
\maketitle
\begin{abstract}
\noindent  A passively advected sharp interface can be represented as the  zero level set  of a  level set  function $f$. 
The linear transport equation $\p_tf+v\cdot \nabla f =0$ is the simplest governing equation for such a  level set  function.  
While the signed distance of the interface is a geometrically convenient function,  e.g., the norm of the gradient is everywhere one,  its time evolution is not governed by the linear transport equation. 
In computational fluid dynamics, several  modifications of  the simplest case have been proposed in order to compute  the signed distance function or to stabilize the norm of the gradient of a  level set  function on the interface. 
The velocity extension method is a prominent   method used for efficient numerical approximation  of the local signed distance function of the interface. 
Our current paper presents a rigorous mathematical formulation of the velocity extension method and  proves that the method provides the local signed distance function of the moving interface.           
A key is to derive a first-order fully nonlinear PDE that is equivalent to the linear  transport equation with  extended velocity. 
Then, wellposedness of the PDE is established in the class of $C^2$-smooth solutions, global in time and local in space, with the local signed distance property, as well as  in the class of $C^0$-viscosity solutions, global in time and space.   
Furthermore, partial regularity of the viscosity solution is proven, thus  confirming that, if initial data is smooth near the initial interface, the viscosity solution is smooth in a time-global tubular neighborhood of the interface, coinciding with the local-in-space $C^2$-smooth solution.     

\medskip

\noindent{\bf Keywords:} 
moving surface;  level set method; signed distance function; linear transport equation; Hamilton-Jacobi equation; viscosity solution; partial regularity of viscosity solution  \medskip

\noindent{\bf AMS subject classifications:} 35Q49;  35F21; 35A24;  35D40; 35R37
  
\end{abstract}
\setcounter{section}{0}
\setcounter{equation}{0}
\section{Introduction}

Let $v=v(t,x):[0,\infty)\times\Omega\to\R^3$ be an (Eulerian) velocity field of a fluid flow in a bounded domain $\Omega\subset\R^3$. 
Although investigation of governing equations for $v$, e.g., the Navier-Stokes system,  is one of the central issues in fluid dynamics, we do not look at such a problem; instead we focus on advection of a scalar quantity with respect to $v$, assuming that $v$ is given (hypothesis on $v$ are stated in Section 2).    
A passively advected (Eulerian) scalar quantity $f=f(t,x):[0,\infty)\times\Omega\to\R^3$ is governed by the linear transport equation
\begin{align}\label{LT}
\frac{\p f}{\p t}(t,x)+v(t,x)\cdot\nabla f(t,x)=0,\quad f(0,\cdot)=\phi^0, 
\end{align} 
where $\nabla=(\p_{x_1},\p_{x_2},\p_{x_3})$. We refer to the introduction in \cite{BFS} for more details on the linear transport equation in fluid dynamics. 
The linear transport equation is closely related to the flow map $X=X(s,t,\xi)$ of the ordinary differential equation (ODE)  
\begin{align}\label{ODE}
x'(s)=v(s,x(s)),
\end{align}
where the (unique) solution of \eqref{ODE} to the  initial condition $x(\tau)=\xi$ is then denoted by $X(s,\tau,\xi)$.   
The (classical) solution of  \eqref{LT} is represented as 
\begin{align}\label{LT-sol} 
f(t,x)=\phi^0(X(0,t,x)). 
\end{align}
This paper focuses on the  level set  method for two-phase flows with a sharp interface, where the moving interface can be described by the  zero level set  of a  level set  function governed by \eqref{LT}. 
Let $\Omega^+(0)$ be a subdomain of $\Omega$ such that $\interface(0):=\p \Omega^+(0)$ is a closed $C^2$-smooth surface that does not intersect $\p\Omega$. Set $\Omega^-(0):=\Omega\setminus(\Omega^+(0)\cup\interface(0))$.  
Suppose that $\Omega^\pm(0)$ are filled with two immiscible fluids distinguished with upper index $\pm$.   
For each $t\ge0$, let $\Omega^\pm(t)\subset \Omega$ denote the set occupied by fluid$^\pm$. We assume that 
$$\mbox{$\Omega^{\pm}(t)=X(t,0,\Omega^\pm(0))$\quad\mbox{ \,\,\, ($\forall\,t\,\ge0$)}.}$$ 
This means that the trajectories $x(\cdot)$  governed by \eqref{ODE} of arbitrary fluid elements in $\Omega^\pm(0)$ can never leave $\Omega^{\pm}(t)$ for all $t\ge0$, i.e., no phase change occurs. 
We further assume that $\p\Omega^+(t)$ stays  away from $\p\Omega$ for all $t$. Then the interface $ \interface(t):=\p\Omega^+(t)=\p\Omega^-(t)\setminus\p\Omega$ is described as 
 $$\interface(t)=X(t,0,\interface(0))\quad\mbox{ \,\,\, ($\forall\,t\,\ge0$)},$$
 which is called a material interface.  
In this situation, the interface is passively advected by the velocity field $v$ without relative motion in the normal direction. 
Thus the speed of normal  (toward $\Omega^-(t)$)  displacement  $V_\interface$ of the moving interface satisfies $V_\interface=v\cdot \nu$ ($\nu$ stands for the normal vector pointing toward $\Omega^-(t)$), which constitutes the so-called kinematic boundary condition.  We remark that $\interface(t)$ is a closed smooth surface for each $t\ge0$, since $X(t,0,\cdot)$ is a diffeomorphism.

In view of the solution formula \eqref{LT-sol} for \eqref{LT}, the interface $\interface(t)$ can be represented as the  zero level set  of a solution $f$ of \eqref{LT}: 
\begin{align*} 
\interface(t)=\{x\in\Omega\,|\,f(t,x)=0\},
\end{align*}
where the initial data $\phi^0:\Omega\to\R$ is given so that  
\begin{align}\label{2initial}
\left. 
\begin{array}{lll}\medskip 
&&\mbox{ $\phi^0>0$ on $\Omega^+(0)$,  \qquad\qquad \,\,\,\,\,\,\quad$\phi^0<0$ on $\Omega^-(0)$}, \\
&&\mbox{$\{x\in\Omega\,|\,\phi^0(x)=0\}=\interface(0)$,}\quad \mbox{ $\nabla \phi^0\neq 0$ on $\interface(0)$.} 
\end{array}
\right.
\end{align}
Moreover, it holds that 
\begin{align*} 
\Omega^+(t)=\{x\in\Omega\,|\,f(t,x)>0\},\quad \Omega^-(t)=\{x\in\Omega\,|\,f(t,x)<0\}. 
\end{align*}
An immediate consequence of the  level set  representation of $\interface(t)$ is that the unit normal vector field $\nu$  and the local curvature  $\kappa$ (with respect to $\nu$) of $\interface(t)$ are given as   
\begin{align}\label{mean}
 \nu(t,x)=-\frac{\nabla f(t,x)}{|\nabla f(t,x)|}\mbox{ (pointing toward $\Omega^-(t)$)},\quad
 \kappa(t,x)= -\nabla\cdot \nu(t,x). 
\end{align} 
More precisely, $\kappa(t,x)$ denotes twice the mean curvature of $\interface(t)$ at each $x\in\interface(t)$ and is represented as $-\nabla_{\interface(t)}\cdot\nu(t,x)$. Since $\nu(t,\cdot)$ from \eqref{mean} is now given in a neighborhood of $\interface(t)$ with $|\nu(t,\cdot)|\equiv 1$ there, it holds that $ \kappa(t,x)= -\nabla\cdot \nu(t,x)$ as stated. Furthermore, it also holds that $\kappa(t,x)=\Delta f(t,x)$ if $|\nabla f(t,\cdot)|\equiv 1$ in a neighborhood of  $x\in\interface(t)$.   
Note that the Navier-Stokes system for $v$ and \eqref{LT} are coupled through $\nu$ and $\kappa$ in a two-phase flow with a capillary interface, i.e., with surface tension acting on $\interface(t)$ (see, e.g., \cite{pruss} for details).  

In computational fluid dynamics, the  level set  method is widely applied to two-phase flow problems, where $\nu$ and $\kappa$ are challenging objects from numerical viewpoints. The major difficulty comes from the fact that passive advection allows $|\nabla f(t,\cdot)|$ to become (exponentially) very small or very large as $t$ increases; this would destroy accuracy of numerical construction of $\nu$ and $\kappa$. In fact, the evolution of $|\nabla f(t,\cdot)|$ along each characteristic curve $x(\cdot)$ (a solution of \eqref{ODE}) is governed by 
\begin{align}\label{evo1}
&\frac{1}{2}\frac{\rm d}{{\rm d}s} |\nabla f(s,x(s))|^2=-\langle \nabla v(s,x(s)) \nabla f(s,x(s)),\nabla f(s,x(s))\rangle, \\\nonumber
& |\nabla f(0,x(0))|=|\nabla \phi^0(x(0))|,
\end{align}  
where $|x|$, $\langle x,y\rangle=x \cdot y$ stand for the Euclidian norm, inner product of $\R^3$, respectively. 
Many efforts have been reported in the literature in order to stabilize  $|\nabla f(t,x)|$ on/near the interface. We refer to three types of stabilization methods: {\it reinitialization methods, nonlinear modification methods} and  {\it the velocity extension method}.  

Reinitialization methods, pioneered in \cite{Sussman1994} and \cite{Sussman1999}, deal with the local signed distance function of the interface. If $\phi^0$ locally coincides with the local signed distance function of $\interface(0)$, it holds that $ |\nabla f(0,\cdot)|\equiv1$ in a neighborhood of $\interface(0)$.
However, \eqref{LT} does not transport the signed distance properties, i.e., $|\nabla f(t,\cdot)|\not\equiv 1$ for $t>0$ near $\interface(t)$ in general. 
The strategy of reinitialization methods is the following: 
{\it
\begin{enumerate}
 \item Consider $0=t_0<t_1<t_2<\cdots$ and solve the linear transport equation \eqref{LT} within each  $[t_i,t_{i+1}]$ with initial condition $f(t_i,\cdot)=[${\rm the local signed distance function of }$\interface(t_i)]$.
 \item  Calculate the local signed distance function of $\interface(t_{i+1})$.
 \item Reset $f(t_{i+1},\cdot)$ to be the local signed distance function of $\interface(t_{i+1})$.
 \end{enumerate}} 
\noindent By this procedure,  $|\nabla f(t,\cdot)|$ is expected to stay close to $1$ near $\interface(t)$  for all $t\ge0$, or $f$  is expected to stay close to the local signed distance function of the interface for all $t\ge0$.    
In \cite{Sussman1994}, the step 2 was numerically done through long time behavior in the corrector equation 
\begin{align}\label{1corr}
\frac{\p \psi}{\p s}(s,x)={\rm sign}(\psi(s,x))(1-|\nabla\psi(s,x)|), \quad \psi(0,\cdot)=f(t_{i+1},\cdot)\qquad ({\rm sign}(0):=0).
\end{align}
There, it was observed (formally) that $\psi(s,\cdot)$ tends to a solution $\bar{\psi}$ of the eikonal equation $|\nabla\psi|=1$ as $s\to\infty$, and hence $\bar{\psi}$ is the signed distance function of $\interface(t_{i+1})$. 
In \cite{HN}, the authors demonstrated rigorous mathematical analysis of reinitialization methods for first-order Hamilton-Jacobi equations in the whole space $\R^n$ in the class of viscosity solutions in the following way: {\it first, describe the switching process of solving a evolutional Hamilton-Jacobi equation for the level set function (corresponding to the step 1) and the corrector equation \eqref{1corr} (corresponding to  the step 2) in terms of a single Hamilton-Jacobi equation generated by $t$-periodic/$t$-discontinuous Hamiltonian with several scaling parameters; then, consider the limit of the period to be $0$ (corresponding to  $t_{i+1}-t_i\to0$) and formulate the limit problem through a  homogenization technique; finally take the limit in the homogenized problem with respect to a scaling parameter (corresponding to $s\to\infty$ in  \eqref{1corr}), where the viscosity solution of the homogenized problem converges to the signed distance function of the original interface, even with initial data that is not the corresponding signed distance function of the initial interface.} 
It is interesting to note that the scaling limit in the  homogenized problem creates the signed distance property, not only  preserving it.

Nonlinear modification methods, starting with \cite {Roisman}, add a nonlinear term to the transport equation \eqref{LT} such that the advection by $v$ and control of the norm of the gradient on the interface take place simultaneously while solving a  single PDE.
Of course, the interface must be unchanged by any such modification.  
The well-known example of such a nonlinear modification is the PDE 
\begin{align}\label{mod-PDE1}
 \frac{\p \phi}{\p t}(t,x) +v(t,x)\cdot \nabla \phi(t,x)=\phi(t,x) \Big<\nabla v(t,x)  \frac{\nabla \phi(t,x)}{|\nabla \phi(t,x)|}, \frac{\nabla \phi(t,x)}{|\nabla \phi(t,x)|}\Big>,
\end{align}
 which is designed so that $ |\nabla \phi(\cdot,x(\cdot))|$ evolves along each characteristic curve $x(\cdot)$ (this is no longer a solution of \eqref{ODE})  located on the interface  according to 
\begin{align*}
&\frac{1}{2}\frac{\rm d}{{\rm d}s} |\nabla \phi(s,x(s))|^2=0.
\end{align*} 
If PDE \eqref{mod-PDE1} is solved with the initial data given to \eqref{LT},   
 formal calculation by the method of characteristics implies  that the nonlinear term in \eqref{mod-PDE1} does not change  the  interface (i.e., the  zero level set  of $\phi(t,\cdot)$ is $\interface(t)$) and preserves  $|\nabla\phi|$ on the interface in the sense that
 \begin{align}\label{1111pre}
 \forall\,t\in[0, \infty) \,\,\,\forall\,x\in\interface(t)\quad \exists\,\xi\in \interface(0) \mbox{ \,\,\, such that\,\,\,\,\,\,}  |\nabla \phi(t,x)|=|\nabla\phi^0(\xi)|.     
\end{align}      
However, since  \eqref{mod-PDE1} is fully nonlinear, wellposedness and regularity of solutions are not at all trivial, where $(t,x)$-global classical solutions cannot be expected in general.    
In \cite{F}, the authors numerically investigated  \eqref{mod-PDE1}. 
In \cite{H}, the author formulated essentially the same kind of nonlinear modification to first-order Hamilton-Jacobi equations in the whole space and demonstrated rigorous analysis in the class of viscosity solutions: {\it if initial data locally coincides with the signed distance function  of the initial interface, a $C^0$-viscosity solution $u(t,x)$ exists and it is differentiable with respect to $x$ on the interface, satisfying $|\nabla u(t,x)|=1$. Furthermore, $u$ stays close to the signed distance function $d$ (assumed to be smooth) of the interface in the sense that for every $\ep>0$ there exists a positive constant $\rho(\ep)>0$ such that} 
\begin{align*}
&e^{-\ep t}d(t,x)\le u(t,x)\le e^{\ep t}d(t,x) \quad \mbox{if $0\le d(t,x)\le \rho(\ep)$},\\
&e^{\ep t}d(t,x)\le u(t,x)\le e^{-\ep t}d(t,x) \quad \mbox{if $- \rho(\ep)\le d(t,x)\le 0$}. 
\end{align*}     
In \cite{BFS}, the authors investigated \eqref{mod-PDE1} in a bounded domain: {\it \eqref{mod-PDE1} is solvable, yielding the same  zero level set  on a time-global tubular neighborhood of the interface (see Section 2 below for definition) in the class of $C^2$-solutions with the preservation of $|\nabla \phi|$ on the interface in the sense of \eqref{1111pre};  \eqref{mod-PDE1} with a cut-off to remove the singularity $\frac{p}{|p|}$ at $p=0$ and the nonlinearity near $\p\Omega$ is $(t,x)$-globally solvable in the class of viscosity solutions; the viscosity solution coincides with the  $C^2$-solution  on a time-global tubular neighborhood of the interface.}     
The last assertion means that the formulas in \eqref{mean} make sense with the $C^0$-viscosity solution. 
We remark that, in a practical situation, e.g., numerical construction of solutions, it would be convenient to investigate the PDE on the whole domain $\Omega$.  
We emphasize that the nonlinear modification method based on \eqref{mod-PDE1} allows such strong control of $|\nabla \phi|$ {\it on} the interface, but it does not provide the local signed distance function of the interface.       
   
The following nonlinear modification was also  investigated in \cite{BFS}: 
$$\frac{\p\phi}{\p t}(t,x)+v(t,x)\cdot\nabla\phi(t,x)=\phi(t,x)(\beta-|\nabla\phi(t,x)|). $$
This provides weaker control of  $|\nabla \phi|$  within a neighborhood of  the interface; namely, 
a suitable choice of the constant $\beta$ (depending on $|\nabla v|$ and $|\nabla\phi^0|$) guarantees an a priori bound of $|\nabla \phi|$ within a small neighborhood of  the interface;  
however,  $|\nabla \phi|$ is not necessarily preserved even on the interface.

 The velocity extension method, pioneered in \cite{ZCMO},  \cite{JS1}, \cite{JS2} and \cite{AJS}, introduces a new velocity field  $\tilde{v}$ in a sufficiently small neighborhood of the interface  $\interface(t)$ via constant extension of $v(t,\cdot)|_{\interface(t)}$ along the normal direction for each $t\ge 0$.   
Having $\tilde{v}$, the new linear transport equation 
 \begin{align}\label{Set1}
\frac{\p \phi}{\p t}(t,x)+\tilde{v}(t,x)\cdot\nabla\phi(t,x)=0
\end{align} 
is solved.
In rigorous mathematical terms, the new velocity field $\tilde{v}$ is defined  in a small neighborhood of $\interface(t)$ as  
$$\tilde{v}(t,x):=v(t,\mathcal{P}^tx),$$
where $\mathcal{P}^tx$ is the metric projection onto $\interface(t)$ (see Section 2 for more precise descriptions). 
The advantage of using $\tilde{v}$ instead of $v$ is described in the following statement \cite{ZCMO},  \cite{JS1}, \cite{JS2}, \cite{AJS}, \cite{Ober}: 
\medskip\medskip

\noindent{\bf Claim 1}\,(outcome of  the velocity extension method){\bf.}  {\it  The PDE \eqref{Set1} with initial data equal to the local signed distance  function of $\interface(0)$ provides a $C^2$-solution $\phi$ such that its zero level set is equal to $\interface(t)$. Furthermore, it holds that $|\nabla \phi(t,\cdot)|\equiv1$ not only on $\interface(t)$ but also in a small neighborhood of  $\interface(t)$ for all $t\ge0$. Thus $\phi(t,\cdot)$ is the $C^2$-smooth local signed distance  function of $\interface(t)$ for all $t\ge0$.}

\medskip\medskip

\noindent More detailed statements will be given in Section 2.
 To the best of the authors' knowledge, rigorous mathematical analysis on  the velocity extension method is not present in the literature. 
In fact,  Claim 1  seems highly nontrivial as \eqref{Set1} is a nonlinear problem due to the metric projection $\mathcal{P}^tx$. 
The purpose of the present paper is to provide a rigorous formulation of the  velocity extension method and to prove Claim 1, also making clear the connection with the  above mentioned nonlinear modification methods. 

In Section 2, we describe difficulties dealing directly with the   ``linear'' equation \eqref{Set1} to prove Claim 1. 
Our strategy to overcome the difficulty is to derive the following fully nonlinear PDE that turns out to be equivalent to \eqref{Set1}:   
\begin{align}\label{mod123}
  \frac{\p \phi}{\p t}(t,x)+v\Big(t,x-\phi(t,x)\nabla\phi(t,x)\Big)\cdot\nabla\phi(t,x)=0.      
\end{align}      
We remark that the equation \eqref{mod123} was  formally derived in \cite{ZCMO}. 
Then, with initial data equal to the local signed distance function of $\interface(0)$,  we will prove that there exists a time-global $C^2$-solution $\phi$ defined on a tubular neighborhood of the interface, where the solution possesses the desired properties; in fact, $\phi(t,\cdot)$ is the local signed distance function of $\interface(t)$ for all $t\ge0$.         
In other words, we rigorously show that \eqref{mod123} is the evolution equation for the local signed distance function of $\interface(t)$, possessing global-in-time/local-in-space $C^2$-solutions.  
Furthermore, we consider the PDE 
\begin{align}\label{mod1234}
  \frac{\p \phi}{\p t}(t,x)+v\Big(t,x-\frac{\phi(t,x)\nabla\phi(t,x)}{|\nabla\phi(t,x)|^2}\Big)\cdot\nabla\phi(t,x)=0,      
\end{align} 
which is a generalization of  \eqref{mod123}. 
The PDE \eqref{mod1234} meets our requirements with  initial data being not necessarily the local signed distance function. 
We will prove that \eqref{mod1234}, with initial data satisfying only \eqref{2initial}, provides a time-global $C^2$-solution $\phi$ defined on a tubular neighborhood of the interface, where {\it $|\nabla \phi|$ is preserved not only on the interface but also in a neighborhood  the interface}.  
We remark that Taylor's approximation for $v\Big(t,x-\frac{\phi(t,x)\nabla\phi(t,x)}{|\nabla\phi(t,x)|^2}\Big)$ with respect to $-\frac{\phi(t,x)\nabla\phi(t,x)}{|\nabla\phi(t,x)|^2}$  in \eqref{mod1234} yields  \eqref{mod-PDE1}, where we stress again that \eqref{mod-PDE1} preserves $|\nabla \phi|$ only on the interface.  Hence, one can see that \eqref{mod1234} provides stronger control of $|\nabla \phi|$  than nonlinear modification methods.   

Finally,  we will establish $(t,x)$-global wellposedness of \eqref{mod1234} with regularization of the singularity from  $\frac{p}{|p|^2}$ in the class of $C^0$-viscosity solutions and  partial $C^2$-regularity of the viscosity solution in a time-global tubular neighborhood of  the interface, where the viscosity solution locally coincides with the global-in-time/local-in-space  $C^2$-solution of \eqref{mod1234}.  Thus,  a $C^0$-viscosity solution of  \eqref{mod1234} provide the $C^2$-smooth local signed distance function of $\{\interface(t)\}_{t\ge0}$. 
\setcounter{section}{1}
\setcounter{equation}{0}
\section{Main results}
We now give rigorous statements on the  velocity extension method.   
Let $\Omega\subset\R^3$ be a bounded connected open set and  $v=v(t,x)$ be a given smooth function satisfying 
\begin{itemize}
\item[(H1)]  $v$ belongs to $C^0([0,\infty)\times\bar{\Omega};\R^3)\cap C^1([0,\infty)\times\Omega;\R^3)$; $v$ is  Lipschitz continuous in $x$ on $[0,\infty)\times\bar{\Omega}$; $v$ is twice partially differentiable in $x$; all of the partial derivatives of $v$ belong to $C^0([0,\infty)\times \Omega;\R^3)$,
\item[(H2)]  $\pm v$ is subtangential to $\p\Omega$, i.e., $\pm v(s,x) \in T_{\p \Omega}(x)$ for all $(s,x) \in [0,\infty)\times \p\Omega$ with  
\begin{align*}
T_{\p \Omega}(x):=\left\{z \in \R^3\,\Big|\, \liminf\limits_{h \to 0+} \frac{ {\rm dist}\, (x+ hz,\p \Omega)}{h}=0\right\} \text{ for } x \in \p \Omega.
\end{align*}
\end{itemize}
Here are remarks on the above hypothesis: we do not assume any regularity of $\p\Omega$;   
the necessary  regularity of $v$ in the case of \eqref{mod-PDE1} is stronger (see \cite{BFS}); 
(H1) implies that $v$ can be smoothly extended to be an element of $C^1(\R\times\Omega;\R^3)$ as 
$$v(t,x):=-v(-t,x)+2 v(0,x)\quad (\forall\, t\le0,\,\,\,\forall\,x\in\bar{\Omega}),$$
and \eqref{ODE} makes sense for $s<0$; such an  extension is always assumed throughout this paper;  the upcoming method of characteristics involved with $v$ works also for $t<0$, where  Appendix 1 in \cite{BFS} will be exploited (Appendix 1 is also based on such extension of $H$ for $t<0$); (H2) implies that $\p\Omega$ and hence $\Omega$ are forward and backward  flow invariant with respect to the flow map $X$ of \eqref{ODE} (see Lemma 2.1 and Appendix 2 in \cite{BFS} for some explanations on ODEs defined on a closed set and flow invariance); (H2) includes the non-penetration condition and non-slip condition in fluid dynamics; (H2) is not necessarily fulfilled for $t<0$ by the extended $v$.      

Due to (H1) and (H2), the material interface $\interface(t)$ stated in the introduction is well-defined for all $t\ge0$ and up to some $t<0$, within which $\interface(t)$ never touches $\p\Omega$.  
The method of characteristics applied to \eqref{LT} shows that the solution \eqref{LT-sol} is $C^2$-smooth for $C^2$-smooth initial data $\phi^0$. Hence  $\interface(t)$ is a closed $C^2$-smooth surface for each $t$. 

Let $\interface_\ep(t)$ with $\ep>0$ denote the $\ep$-neighborhood of $\interface(t)$, i.e.,
$$\interface_\ep(t):=\bigcup_{x\in\interface(t)}\{y\in\R^3\,|\,|x-y|<\ep\}.$$
{\it A set $\Theta\subset [0,\infty)\times\Omega$ is said to be a ($t$-global) tubular neighborhood of  $\{\interface(t)\}_{t\ge0}$, if  $\Theta$ contains
$$\bigcup_{t\ge0}\Big(\{t\}\times \interface(t)\Big),$$
$\{x\in\Omega\,|\,(t,x)\in\Theta\}$ is an open set for each $t\ge0$ and there exists a nonincreasing function $\ep:[0,\infty)\to\R_{>0}$ such that
$$\{t\}\times\interface_{\ep(t)}(t)\subset\Theta\quad (\forall\,t\ge0).$$}
\indent We introduce the metric projection $\mathcal{P}^t$ from $\Omega$ to $\interface(t)$ as 
$$\mathcal{P}^tx:=\Big\{ \xi\in\interface(t)\,\Big|\,\, |x-\xi|= \inf_{y\in\interface(t)}|x-y|\Big\}\,\,\,\,\mbox{\quad for each $x\in\Omega$, $t\ge0$},$$
where $\mathcal{P}^t$ is well-defined entirely on $\Omega$ as a set-valued mapping. 
Note that  $\mathcal{P}^tx\neq\emptyset$ for all $x\in \Omega$,  since $\interface(t)$ is compact. 
If $\mathcal{P}^tx$ is a singleton, i.e.,  $\mathcal{P}^tx=\{\xi\}$, we simply denote it $\mathcal{P}^tx=\xi$. 
We state basic properties of $\mathcal{P}^t$, including that  $\mathcal{P}^t$ acts  locally near $\interface(t) $ as  the orthogonal projection onto $\interface(t)$:  
\begin{Lemma}\label{Prop001}
For each $t\ge0$,  $x\in\Omega\setminus\interface(t)$ and $\xi\in \mathcal{P}^t x$, the line segment connecting $x$ and $\xi$ is orthogonal to $\interface(t)$ at $\xi$. Furthermore, there exists $\ep=\ep(t)>0$ such that $\mathcal{P}^tx$ is a singleton for every $x$ belonging to $\interface_\ep(t)$.
%
\end{Lemma}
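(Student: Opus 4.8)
The plan is to establish the two assertions separately: the orthogonality of the projection segment by a one-variable minimization argument, and the local single-valuedness of $\mathcal{P}^t$ by the classical tubular-neighborhood construction, made uniform over $\interface(t)$ by compactness.

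For the orthogonality claim, I would fix $t\ge0$, $x\in\Omega\setminus\interface(t)$ and $\xi\in\mathcal{P}^tx$, and use that $\interface(t)=X(t,0,\interface(0))$ is a closed $C^2$-surface (as recalled just before the lemma). Given any tangent vector $w\in T_\xi\interface(t)$, choose a $C^1$ curve $\gamma:(-\delta,\delta)\to\interface(t)$ with $\gamma(0)=\xi$ and $\gamma'(0)=w$. Since $|x-\xi|=\inf_{y\in\interface(t)}|x-y|\le|x-\gamma(s)|$, the scalar function $s\mapsto|x-\gamma(s)|^2$ attains its minimum over $(-\delta,\delta)$ at $s=0$; differentiating at $s=0$ gives $\langle x-\xi,w\rangle=0$. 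As $w\in T_\xi\interface(t)$ was arbitrary and $x-\xi\neq0$, the segment from $x$ to $\xi$ is orthogonal to $\interface(t)$ at $\xi$. Since $\interface(t)$ is a hypersurface in $\R^3$, this forces $x-\xi=\pm|x-\xi|\,\nu(t,\xi)$, where $\nu(t,\cdot)$ is the continuous — hence $C^1$, because $\interface(t)$ is $C^2$ — unit normal field along $\interface(t)$, which is globally well-defined as $\interface(t)=\p\Omega^+(t)$ is orientable.

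For the single-valuedness near $\interface(t)$, I would introduce the normal map $\Phi:\interface(t)\times\R\to\R^3$, $\Phi(\eta,r):=\eta+r\,\nu(t,\eta)$, which is $C^1$. At every point $(\eta,0)$ its differential is an isomorphism: the restriction of $d\Phi(\eta,0)$ to $T_\eta\interface(t)\times\{0\}$ is the inclusion $T_\eta\interface(t)\hookrightarrow\R^3$, while the $\p_r$-direction is mapped to $\nu(t,\eta)$, transverse to $T_\eta\interface(t)$. By the inverse function theorem, $\Phi$ is a local $C^1$-diffeomorphism near each $(\eta,0)$. A routine compactness argument then upgrades this to a uniform radius $\ep_0=\ep_0(t)>0$: if $\Phi$ were not injective on $\interface(t)\times(-\ep_0,\ep_0)$ for any $\ep_0>0$, one would get $\Phi(\eta_n,r_n)=\Phi(\zeta_n,s_n)$ with $(\eta_n,r_n)\neq(\zeta_n,s_n)$ and $r_n,s_n\to0$; passing to convergent subsequences $\eta_n\to\eta$, $\zeta_n\to\zeta$ (possible since $\interface(t)$ is compact) gives $\eta=\zeta$, contradicting local injectivity near $(\eta,0)$ for large $n$. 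Thus $\Phi$ maps $\interface(t)\times(-\ep_0,\ep_0)$ bijectively onto an open set $N\supset\interface(t)$, and shrinking gives $\interface_\ep(t)\subset N$ for some $0<\ep\le\ep_0$. Finally, for $x\in\interface_\ep(t)$: if $x\in\interface(t)$ then trivially $\mathcal{P}^tx=\{x\}$; otherwise any $\xi\in\mathcal{P}^tx$ satisfies $|x-\xi|=\mathrm{dist}(x,\interface(t))<\ep$ and, by the first part, $x=\xi+r\,\nu(t,\xi)=\Phi(\xi,r)$ with $|r|=|x-\xi|<\ep_0$, so injectivity of $\Phi$ on $\interface(t)\times(-\ep_0,\ep_0)$ pins down $\xi$ uniquely, i.e. $\mathcal{P}^tx$ is a singleton.

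The only genuinely delicate point is the passage from the pointwise local-diffeomorphism property of $\Phi$ to a uniform injectivity radius $\ep_0(t)$; this is where compactness of $\interface(t)$ and its $C^2$-regularity (so that $\nu(t,\cdot)\in C^1$) are essential, and it is the analytic heart of the statement — everything else is elementary. If one prefers to bypass the inverse function theorem, the same uniform $\ep_0$ can be extracted directly from a bound on the second fundamental form of $\interface(t)$ by estimating $|\xi_1-\xi_2|\le C(t)\,|x-\xi_1|\,|\xi_1-\xi_2|$ for two hypothetical nearest points $\xi_1,\xi_2$ of the same $x$ and choosing $\ep_0$ with $C(t)\,\ep_0<1$.
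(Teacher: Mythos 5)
Your proof is correct, and the first half (orthogonality) is essentially the same one-variable minimization argument the paper uses, just phrased via a derivative at an interior minimum rather than one-sided difference quotients; both hinge on the $C^2$-regularity of $\interface(t)$ supplying $C^1$ curves and a $C^1$ normal field.

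The second half (uniform single-valuedness) takes a genuinely different route. You invoke the inverse function theorem for the normal map $\Phi(\eta,r)=\eta+r\,\nu(t,\eta)$ to get local diffeomorphism at each $(\eta,0)$, then run a soft compactness/contradiction argument to extract a uniform injectivity radius $\ep_0(t)$ — the standard tubular neighborhood theorem. The paper instead argues directly and quantitatively: it bounds $|\p\nu_i/\p x_j|\le M$ on $\interface_{\ep_1}(t)$, writes $\nu(t,\xi)-\nu(t,\zeta)=A(\xi-\zeta)$ with $A$ an integral of $\nabla\nu$ along the segment, arrives at $(I+sA)(\xi-\zeta)=0$, and chooses $\ep$ explicitly so that $\det(I+sA)>0$, forcing $\xi=\zeta$. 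What your route buys is conceptual economy and full generality (it is the textbook proof, and you correctly flag where compactness and $C^2$-regularity enter); what the paper's route buys is an explicit determinant estimate \eqref{ppp11}, a pattern the authors reuse almost verbatim in Step 2 of the proof of Theorem~\ref{main-thm2} (the injectivity estimate \eqref{t-star}), so their choice is partly stylistic coherence with what comes later. Note that your closing remark — extract $\ep_0$ from a second-fundamental-form bound via $|\xi_1-\xi_2|\le C(t)|x-\xi_1||\xi_1-\xi_2|$ — is in fact essentially the paper's own argument, so you have reconstructed both approaches. One small point worth keeping in mind (present in both proofs): when two nearest points $\xi,\zeta\in\mathcal{P}^tx$ are written as $x=\xi+r\,\nu(t,\xi)=\zeta+s\,\nu(t,\zeta)$, the equalities $|r|=|s|=\mathrm{dist}(x,\interface(t))$ and the common side of $\interface(t)$ on which $x$ lies force $r=s$; this is needed to land both preimages in the same chart of $\Phi$ (or, in the paper's version, to write a single scalar $s$ in $(I+sA)$).
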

\begin{proof}
Fix an arbitrary $t\ge0$. Take $x\in\Omega\setminus\interface(t)$ and $\xi\in \mathcal{P}^t x$. 
Since  $\interface(t)$ is a closed $C^2$-surface, there exists the tangent plane $T_\xi\interface(t)$  at $\xi$. 
Let $q:(-\delta,\delta)\to \interface(t)$, $\delta>0$ be any smooth curve such that $q(0)=\xi$. 
Note that $q'(0)\in T_\xi\interface(t)$. 
By definition, it holds that 
$$|x-\xi|^2\le |x-q(s)|^2=|x-\xi-(q(s)-q(0))|^2\quad (\forall\,s\in(-\delta,\delta)),$$
which leads to 
$$ 2(x-\xi)\cdot(q(s)-q(0)) \le|(q(s)-q(0))|^2\quad (\forall\, s\in(-\delta,\delta)). $$
Dividing the inequality by $s\neq0$ and sending $s\to 0\pm$, we obtain 
$$(x-\xi)\cdot q'(0)\le 0,\quad (x-\xi)\cdot q'(0)\ge 0.$$ 
This shows the first assertion. 

Since the solution $f(t,\cdot)$ of \eqref{LT} is $C^2$-smooth, $\nu(t,x):=-\nabla f(t,x)/|\nabla f(t,x)|$ is $C^1$-smooth and $\nu(t,x)|_{x\in\interface (t)}$ is  the unit outer normal field of $\interface(t)$. 
Since $\interface(t)$ is compact, we find $\ep_1>0$ and $M>0$ such that $\interface_{\ep_1}(t)\subset\Omega$ and $|\frac{\p \nu_i}{\p x_j}(t,x)|\le M$ ($i,j=1,2,3$) for all $x\in\interface_{\ep_1}(t)$. 
Let $\ep\in(0,\ep_1/2]$ be a constant such that
\begin{align}\label{ppp11}
 1-\{3\cdot(2 M\ep)+6\cdot(2M\ep)^2+6\cdot(2M\ep)^3 \}>0 .  
 \end{align}
Suppose that there exists a point $x\in \interface_\ep(t)$ such that $\mathcal{P}^tx$ is not a singleton. 
Then, we have two different points $\xi,\zeta\in\mathcal{P}^t x\subset \interface(t)$ and $s\in(-\ep,\ep)$ such that  
\begin{align*}
 x-\xi=s\nu(t,\xi),\,\,\, x-\zeta=s\nu(t,\zeta),\,\,\,
|\xi-\zeta|\le|\xi-x|+|x-\zeta|<2\ep\le \ep_1.
\end{align*} 
 Hence, we see that the line segment connecting $\xi$ and $\zeta$ is included in $\interface_{\ep_1}(t)$ and that 
 \begin{align*}
&\nu(t,\xi)-\nu(t,\zeta)=\int_0^s\frac{{\rm d}}{{\rm d}s}\nu\big(t,s(\xi-\zeta)\big){\rm d}s
=\int_0^s\nabla\nu\big(t,s(\xi-\zeta)\big){\rm d}s(\xi-\zeta)=:A(\xi-\zeta),\\ 
&0=\xi-\zeta+s\{\nu(t,\xi)-\nu(t,\zeta)\}=(I+sA)(\xi-\zeta), 
\end{align*}
where the $3\times3$-matrix $A=[A_{ij}]$ is such that $|A_{ij}|\le M$ for all $i,j=1,2,3$. The matrix $I+sA$ is invertible due to \eqref{ppp11}, and therefore we reach a contradiction. 
\end{proof}
As stated in the introduction, the original idea of the  velocity extension method is to  introduce a new velocity field $\tilde{v}$ in a tubular neighborhood $\Theta^0$ of $\{\interface(t)\}_{t\ge0}$ that is an  extension of $v(t,\cdot)|_{\interface(t)}$ having constant values along the normal directions of $\interface(t)$, i.e., 
\begin{align}\label{2DD}
\tilde{v}:\Theta^0\to\R^3,\,\,\,\tilde{v}(t,x):=v(t,\mathcal{P}^tx), 
\end{align}
and then solve the linear transport equation in a tubular neighborhood $\Theta$ of $\{\interface(t)\}_{t\ge0}$ contained in $\Theta^0$:    
\begin{align}\label{Sethian-LT}
&&\left\{
\begin{array}{lll}\medskip
&\dis \frac{\p \phi}{\p t}(t,x)+\tilde{v}(t,x)\cdot\nabla\phi(t,x)=0\quad\mbox{in $\Theta|_{t>0}$}\\ 
&\phi=\phi^0\quad\mbox{on $\Theta|_{t=0}$} 
\end{array}
\right.
\end{align} 
with initial data $\phi^0$ being the local signed distance function of $\interface(0)$. 

While the ``linear'' problem \eqref{Sethian-LT} is standard in numerics, rigorous analysis to obtain Claim 1 directly from \eqref{Sethian-LT}  seems to be difficult.    
In the first place, in order to obtain a well-defined (i.e., single-valued) velocity field $\tilde{v}$ as \eqref{2DD},  we have to assume that  there exists  a tubular neighborhood $\Theta^0$ of $\{\interface(t)\}_{t\ge0}$ in which $\mathcal{P}^tx$ is a singleton for all $(t,x)\in\Theta^0$.  
This is a property of the individual surfaces $\interface(t)$, and it is shown to hold true locally near each $\interface(t)$, if the normal field is itself $C^1$-smooth, c.f., Lemma \eqref{Prop001} above and also Chapter 2.3 in \cite{pruss}. 
Next, in order to obtain joint continuity of $\mathcal{P}^tx$ in $(t,x)$, the family  $\{\interface(t)\}_{t\ge0}$ needs to have regularity in $t$. For instance, if $t\mapsto\interface(t)$ and the field of the normal bundles are continuous with respect to the Hausdorff metric, and all of $\interface(t)$ are $C^2$-surfaces, then $\mathcal{P}^tx$ is continuous in $(t,x)$ locally near $\cup_{t\ge0}\big(\{t\}\times\interface(t)\big)$; see Chapter 2.5 in   \cite{pruss}. 
Here, such regularity is not a priori clear from (H1) and (H2), and in order to solve \eqref{Sethian-LT} in the classical sense, we have to obtain even  stronger regularity of $\mathcal{P}^tx$, say $C^1$-smoothness, as part of the solution process.   
If we assume regularity of  $\mathcal{P}^tx$, the following statement would be expected: 
\medskip
  
\noindent{\bf Claim 1'.} {\it Suppose that  there exists  a tubular neighborhood $\Theta^0$ of $\{\interface(t)\}_{t\ge0}$  such that $(t,x)\mapsto\mathcal{P}^tx$ is single-valued and $C^1$-smooth in $\Theta^0$.  Then there exists a tubular neighborhood $\Theta\subset\Theta_0$ of $\{\interface(t)\}_{t\ge0}$ and a unique $C^2$-solution $\phi$ of \eqref{Sethian-LT}, which satisfies 
\begin{align}\label{1pre}
|\nabla \phi|\equiv1\mbox{ in $\Theta$.}
\end{align}}
\indent We examine Claim 1'.  In this statement, $\tilde{v}$ is assumed to be $C^1$-smooth and the flow map $\tilde{X}(s,\tau,\xi)$ of the ODE $x'(s)=\tilde{v}(s,x(s))$ is locally well-defined.  Since $\tilde{v}=v$ on $\{t\}\times\interface(t)$ for all $t\ge0$, 
 $x(s):=X(s,0,\xi)$ with $\xi \in\interface(0)$ satisfies $x(s)\in \interface(s)$ for all $s\ge 0$ and $x'(s)=\tilde{v}(s,x(s))$. 
 Hence, $\tilde{X}(t,0,\xi)=X(t,0,\xi)$  for all $\xi\in\interface(0)$ and $t\ge0$.  
Furthermore, $\Theta\subset\Theta^0$ exists in such a way that $\Theta$ consists of characteristic curves starting from $s=0$ defined by $\tilde{X}$. 
This is indeed possible:      
due to the continuity of $\tilde{X}$, we find $\ep_1>0$ such that 
$$\bigcup_{0\le s\le 1} \Big(\{s\}\times \tilde{X}(s,0,\interface_{\ep_1}(0))\Big)\subset \Theta^0;$$
then, we find $\ep_2>0$ such that 
$$\interface_{\ep_2}(1)\subset  \tilde{X}(1,0,\interface_{\ep_1}(0)),\quad   
\bigcup_{1\le s\le 2} \Big(\{s\}\times \tilde{X}(s,1,\interface_{\ep_2}(0))\Big)\subset \Theta^0;$$
repeating this process, we find a tubular neighborhood $\Theta$ of $\{\interface(t)\}_{t\ge0}$ that is contained in $\Theta^0$ such that 
$$\tilde{X}(s,t,x)\in \Theta\quad \mbox{ ($\forall\,(t,x)\in\Theta$, $\forall\,s\in[0,t]$)}.$$ 
Within $\Theta$, the method of characteristics is applicable to \eqref{Sethian-LT}, yielding the solution formula $\phi(t,x)=\phi^0(\tilde{X}(0,t,x))$, where we note that the $C^2$-regularity is not directly visible from the right-hand side.
We immediately see that 
\begin{enumerate}
\item $\interface^{\phi}(t):=\{x\in\Omega\,|\,\phi(t,x)=0\}=\interface(t)$, $\phi(t,\cdot)>0$ in $\Theta\cap\Omega^+(t)$ and $\phi(t,\cdot)<0$ in $\Theta\cap\Omega^-(t)$ for each $t\ge0$.  
\item The outer (from $\Omega^+(t)$ to $\Omega^-(t)$) unit normal field $\nu(t,\cdot)$ of $\interface(t)$ coincides with  $-\frac{\nabla\phi(t,\cdot)}{|\nabla\phi(t,\cdot)|}$ on $\interface(t)$.
\item For each $x\in\interface(t)$, the directional derivative of $\tilde{v}(t,\cdot)$ at $x$ along $\pm\nu(t,x)$ is equal to zero, i.e., $\nabla\tilde{v}(t,x)\nu(t,x)=0$, since $\tilde{v}(t,x+a\nu(t,x))\equiv v(t,x)$ for all $|a|\ll 1$.    
\item Along each characteristic curve $x(s):=\tilde{X}(s,t,x)$ with $x\in\interface(t)$, $s\in[0,t]$, it holds that $x(s)\in\interface(s)$ for all $s\in[0,t]$ and 
\begin{align}\label{11pre}
&\frac{\rm d}{{\rm d}s}|\nabla \phi(s,x(s))|^2=-2\Big\langle\nabla\tilde{v}(s,x(s))\nabla \phi(s,x(s)),\nabla \phi(s,x(s))\Big\rangle=0,\\\label{12pre}
& |\nabla \phi(0,x(0))|=|\nabla\phi^0(x(0))|=1,
\end{align}
which leads to  $|\nabla \phi(t,x)|\equiv1$ on $\interface(t)$ for all $t\ge0$.
\end{enumerate}
In order to verify \eqref{1pre}, one must further confirm \eqref{11pre} for all characteristic curves starting from  a small neighborhood of $\interface(0)$, where we note that \eqref{12pre} is fulfilled, since $\phi^0$ is taken to be the local signed distance function of $\interface(0)$.  
The question is then whether or not  $\nabla\tilde{v}(s,x(s))\nabla \phi(s,x(s))=0$ along each characteristic curve $x(s)$ away from $\interface(s)$.  Since $\nabla\tilde{v}(s,x(s))\nabla \phi(s,x(s))$ is the directional derivative of $\tilde{v}(s,\cdot)$ at $x(s)$ in the direction of $\nabla \phi(s,x(s))$, the definition of $\tilde{v}$ implies that $\nabla\tilde{v}(s,x(s))\nabla \phi(s,x(s))=0$ if  
\begin{align}\label{13con}
\nabla \phi(t,x)=\nabla \phi(t, \mathcal{P}^tx)\quad(\forall\,(t,x)\in\Theta).
\end{align} 
As we will see later in Proposition \ref{Prop11}, the identity \eqref{13con} is a consequence of \eqref{1pre},  which looks like {\it circulus in probando}.    
 On top of verification of the regularity hypothesis on $\mathcal{P}^tx$, this is a major nontrivial aspect of  the velocity extension method, particularly in view of  the ``linear'' problem \eqref{Sethian-LT}. 
 Thus,  we will take  a different approach to prove  Claim 1. 

Our strategy is the following:  assuming Claim 1' to be true, we may derive a fully nonlinear first-order PDE (Hamilton-Jacobi equation) without containing the projection $\mathcal{P}^t$,  which is equivalent to \eqref{Sethian-LT}; then we prove wellposedness of the PDE in the class of classical solutions (locally in space and global in time) based on the idea in \cite{BFS} and independently from Claim 1'. 
We will see that such a solution coincides with the function in Claim 1, thus rigorously justifying the  velocity extension method.     
For this purpose, we first observe the following fact: 
\begin{Prop}\label{Prop11}
Suppose that there exists a tubular neighborhood $\Theta$ of $\{\interface(t)\}_{t\ge0}$ and a $C^2$-function $\phi:\Theta\to\R$ such that 
\begin{align}\label{1G}
\left\{
\begin{array}{lll}
  |\nabla\phi|&\equiv& 1\mbox{\quad in $\Theta$},\\
  \phi(t,\cdot)&>&0\mbox{\quad in $\Theta\cap\Omega^+(t)$ for each $t\ge0$}, \\
 \phi(t,\cdot)&<&0\mbox{\quad in $\Theta\cap\Omega^-(t)$  for each $t\ge0$}.
 \end{array}
 \right.  
\end{align}
Then the following statements hold true for each $t\ge0$: 
\begin{itemize}
\item[\rm(i)] For each $\xi\in\interface(t)$, $\nabla\phi(t,\cdot)$ is constant on the line segment $$l(\xi):=\{\xi+s\nabla\phi(t,\xi)\,|\,s\in\R\}\cap \{x\in\Omega\,|\,(t,x)\in\Theta  \}.$$ 
\item[\rm(ii)]  $\phi(t,\cdot)$ is  the signed distance function of $\interface(t)$. 
\item[\rm(iii)] In $\Theta$, the metric projection $\mathcal{P}^t$ onto $\interface(t)$ is single-valued and represented as  
\begin{align*}
\mathcal{P}^tx=x-\phi(t,x)\nabla\phi(t,x).
\end{align*}
In particular, $(t,x)\mapsto\mathcal{P}^tx$ is $C^1$-smooth in $\Theta$. 
\end{itemize}
\end{Prop}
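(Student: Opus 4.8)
The plan is to reduce everything to the single elementary identity obtained by differentiating the eikonal relation, and then to read off (i)--(iii) from the geometry of the characteristics of $\nabla\phi$.

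First I fix $t\ge0$, abbreviate $u:=\phi(t,\cdot)\in C^2(\Omega_t)$ with $\Omega_t:=\{x\in\Omega:(t,x)\in\Theta\}$, and record the preliminary facts that follow from \eqref{1G} by continuity: $\{x\in\Omega_t:u(x)=0\}=\interface(t)$ (a point of $\Omega_t\setminus\interface(t)$ lies in $\Omega^\pm(t)$, where $u\neq0$; conversely $u=0$ on $\interface(t)$ because $u$ changes sign across it), and $\nabla u(\xi)$ is normal to the $C^2$-surface $\interface(t)$ at each $\xi\in\interface(t)$ and points into $\Omega^+(t)$, so $\nabla u(\xi)=-\nu(t,\xi)$. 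Differentiating $|\nabla u|^2\equiv1$ with respect to $x_i$ yields the key identity $D^2u(x)\,\nabla u(x)=0$ for all $x\in\Omega_t$, where $D^2u$ is the Hessian matrix of $u$. To prove (i) I then integrate the vector field $\nabla u$: the ODE $\dot y(s)=\nabla u(y(s))$, $y(0)=\xi$, has a unique $C^1$ solution on a maximal interval because $\nabla u\in C^1$ --- this is exactly where the $C^2$-hypothesis on $\phi$ enters --- and along it $\frac{d}{ds}\nabla u(y(s))=D^2u(y(s))\,\dot y(s)=D^2u(y(s))\,\nabla u(y(s))=0$. Hence $\nabla u(y(s))\equiv\nabla u(\xi)$, so $\dot y\equiv\nabla u(\xi)$ and $y(s)=\xi+s\,\nabla u(\xi)$, and the maximal interval is exactly (the component through $0$ of) $l(\xi)$; thus $\nabla u$ is frozen along $l(\xi)$, and since $\frac{d}{ds}u(y(s))=|\nabla u(y(s))|^2=1$ one also records the companion identity $u(\xi+s\nabla u(\xi))=s$ on $l(\xi)$.

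For (iii) I let $\ep(t)>0$ be as in Lemma \ref{Prop001} and, after shrinking $\Theta$ if necessary (which leaves \eqref{1G} intact), assume $\mathrm{dist}(x,\interface(t))<\ep(t)$ for all $x\in\Omega_t$, so that $\mathcal P^t$ is single-valued on $\Omega_t$. Given $x\in\Omega_t$, put $\xi:=\mathcal P^tx\in\interface(t)$; by the first assertion of Lemma \ref{Prop001} the segment $[\xi,x]$ is normal to $\interface(t)$ at $\xi$, so $x=\xi+s\,\nu(t,\xi)=\xi-s\,\nabla u(\xi)$ with $|s|=|x-\xi|=\mathrm{dist}(x,\interface(t))<\ep(t)$, and this whole segment lies in $\interface_{\ep(t)}(t)\subset\Omega_t$, i.e.\ $x\in l(\xi)$. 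Applying (i) along $l(\xi)$ gives $\nabla u(x)=\nabla u(\xi)$ and $u(x)=-s$, so
\begin{align*}
x-u(x)\nabla u(x)=\bigl(\xi-s\nabla u(\xi)\bigr)-(-s)\nabla u(\xi)=\xi=\mathcal P^tx,
\end{align*}
which is the formula in (iii); the joint $C^1$-smoothness of $(t,x)\mapsto\mathcal P^tx=x-\phi(t,x)\nabla\phi(t,x)$ then follows since $\phi$ and $\nabla_x\phi$ are $C^1$ on $\Theta$. Reading the same computation as $|u(x)|=|s|=|x-\mathcal P^tx|=\mathrm{dist}(x,\interface(t))$, with the sign of $u(x)$ matching that of the signed distance function by \eqref{1G}, gives (ii).

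I expect all the conceptual content to sit in the step ``$D^2u\,\nabla u=0$, hence the integral curves of $\nabla u$ are straight segments normal to $\interface(t)$ along which $\nabla u$ is constant''; the rest is bookkeeping. The two places that need genuine care are: (a) orienting $\nabla u(\xi)$ against $\nu(t,\xi)$ so that the signs in $u(x)=-s$ and in the projection formula come out correctly, and (b) keeping the segment $[\xi,x]$ inside $\Omega_t$ so that (i) is applicable at $x$ --- this is precisely what forces one to work inside a tubular neighborhood on which $\mathcal P^t$ is single-valued (Lemma \ref{Prop001}) rather than on an arbitrary $\Theta$.
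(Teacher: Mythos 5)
Your argument is essentially the same as the paper's: the heart of both proofs is that differentiating $|\nabla\phi|^2\equiv1$ gives $D^2\phi\,\nabla\phi=0$, so the integral curves of $\nabla\phi(t,\cdot)$ are straight and carry a constant gradient, and then everything in (ii)--(iii) follows from the orthogonality part of Lemma~\ref{Prop001}. (Your ``companion identity'' $u(\xi+s\nabla u(\xi))=s$ is the same piece of information the paper extracts with the mean value theorem.)

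The one place you deviate, and it is a slight weakening, is in how single-valuedness of $\mathcal{P}^t$ in (iii) is obtained. You shrink $\Theta$ so that every $x\in\Omega_t$ lies within the Lemma~\ref{Prop001} radius $\ep(t)$ and then invoke the second assertion of that lemma. The paper instead derives single-valuedness on the \emph{whole} given $\Theta$ purely from (i) and the orthogonality assertion: if $\xi,\zeta\in\mathcal{P}^tx$ then $x-\xi=s\nabla\phi(t,\xi)$, $x-\zeta=s\nabla\phi(t,\zeta)$ with the same $s$ (equal distances, same sign), so $x\in l(\xi)\cap l(\zeta)$, and (i) forces $\nabla\phi(t,\xi)=\nabla\phi(t,x)=\nabla\phi(t,\zeta)$, hence $\xi=\zeta$. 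This is cleaner for two reasons: it proves the claim as stated (``in $\Theta$'', not in some smaller set), and it sidesteps a technicality you gloss over --- Lemma~\ref{Prop001} produces an $\ep(t)>0$ pointwise in $t$ with no monotonicity, so intersecting $\Theta$ with $\bigcup_t\{t\}\times\interface_{\ep(t)}(t)$ is not automatically a tubular neighborhood in the paper's sense. I'd recommend replacing your shrinking step with the paper's direct argument from (i); the rest of your computation (the sign bookkeeping, the projection formula, and reading off the signed-distance property) then goes through unchanged.
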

\begin{proof}
Note that $\interface^{\phi}(t):=\{x\in\Omega\,|\,\phi(t,x)=0\}=\interface(t)$ for all $t\ge0$.  
Let $s\mapsto\gamma(s)$ be the unique solution of  
$$\gamma'(s)=\nabla\phi(t,\gamma(s)),\quad\gamma(0)=\xi\in\interface(t)\qquad (\mbox{$t$ is now fixed}).$$
As long as  $(t,\gamma(s))\in\Theta$, we have $|\nabla\phi(t,\cdot)|\equiv1$ in a neighborhood of $\gamma(s)$. 
Hence, we obtain  
\begin{align*}
\frac{\rm d}{{\rm d}s}\frac{\p \phi}{\p{x_i}}(t,\gamma(s))&= \nabla\frac{\p \phi}{\p{x_i}}(t,\gamma(s))\cdot \nabla\phi(t,\gamma(s)) 
=\frac{\p}{\p x_i} \nabla\phi(t,\gamma(s))\cdot \nabla\phi(t,\gamma(s))\\
&=\frac{1}{2}\frac{\p}{\p x_i} |\nabla\phi(t,x)|^2\Big|_{x=\gamma(s)} 
=0,
\end{align*}
which yields $\nabla\phi(t,\gamma(s))\equiv\nabla\phi(t,\gamma(0))=\nabla\phi(t,\xi)$. 
We also find that $\gamma(s)=\xi+s\nabla\phi(t,\xi)$, which concludes (i). 

Note that $\phi(t,\cdot)=0$ on $\interface(t)$ and $-\nabla\phi(t,\xi)$ is the outer unit normal of $\interface(t)$ at $\xi\in\interface(t)$. 
For each $(t,x)\in\Theta$, $\mathcal{P}^t x$ is a singleton.  
In fact, if $\xi,\zeta\in \mathcal{P}^t x$, Lemma \ref{Prop001} implies that there exists $s\in\R$ such that $x-\xi=s\nabla\phi(t,\xi)$, $x-\zeta=s\nabla\phi(t,\zeta)$ and (i) implies that $\nabla\phi(t,\xi)=\nabla\phi(t,x)=\nabla\phi(t,\zeta)$, which yields $\xi=\zeta$.     
Hence, with $\xi:= \mathcal{P}^t x\in\interface(t)$,  it holds that  $x-\xi=s\nabla\phi(t,\xi)$ with some $s\in\R$ and 
$$\phi(t,x)=\phi(t,x)-\phi(t,\xi)=\nabla \phi(t,\tilde{x})\cdot(x-\xi)=\nabla \phi(t,\tilde{x})\cdot s\nabla\phi(t,\xi),$$ 
where  $\tilde{x}$ is some point on $l(\xi)$. 
By (i), we obtain $\nabla\phi(t,\tilde{x})\cdot\nabla\phi(t,\xi)=\nabla\phi(t,\xi)\cdot \nabla\phi(t,\xi)=1$ and 
$$\phi(t,x)=s={\rm sign}(s)|x-\xi|={\rm sign}(s)\min_{y\in\interface(t)}|x-y|,$$
where ${\rm sign}(s)=\pm 1$ if $x\in \Omega^\pm(t)$. Now (ii) is proven. 

For each $(t,x)\in\Theta$ and  $\xi:= \mathcal{P}^t x$, the above reasoning implies that  
\begin{align*}
\mathcal{P}^t x=\xi=x- s\nabla\phi(t,\xi)=x-\phi(t,x) \nabla\phi(t,x).
\end{align*}
\end{proof}
Proposition \ref{Prop11} shows that the function $\phi$ in Claim 1' and the one in Claim 1 are indeed the local distance function of $\interface (t)$ for each $t\ge0$.  
Furthermore,  by Proposition \ref{Prop11}, we obtain:     
\begin{Prop}\label{Proppp1122}
Suppose that Claim 1'  holds true. Then the solution $\phi$ to \eqref{Sethian-LT} solves 
\begin{align}\label{S-HJ}
&&\left\{
\begin{array}{lll}\medskip
&\dis \frac{\p \phi}{\p t}(t,x)+v\Big(t,x-\phi(t,x)\nabla\phi(t,x)\Big)\cdot\nabla\phi(t,x)=0\quad\mbox{in $\Theta|_{t>0}$}\\&\phi=\phi^0\quad\mbox{on $\Theta|_{t=0}$}.
\end{array}
\right.
\end{align}
\end{Prop}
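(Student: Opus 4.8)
\textbf{Proof plan for Proposition \ref{Proppp1122}.}
The plan is to start from the linear transport equation \eqref{Sethian-LT} satisfied by $\phi$ and simply rewrite the extended velocity $\tilde v$ using the explicit formula for the metric projection supplied by Proposition \ref{Prop11}(iii). By hypothesis, Claim 1' holds, so there is a tubular neighborhood $\Theta$ of $\{\interface(t)\}_{t\ge0}$ and a unique $C^2$-solution $\phi$ of \eqref{Sethian-LT} with $|\nabla\phi|\equiv 1$ in $\Theta$, i.e. \eqref{1pre}. Moreover the zero level set of $\phi(t,\cdot)$ is $\interface(t)$, and $\phi$ is positive on $\Theta\cap\Omega^+(t)$ and negative on $\Theta\cap\Omega^-(t)$ — this is exactly the hypothesis \eqref{1G} of Proposition \ref{Prop11}. (If needed, one shrinks $\Theta$ slightly so that it lies inside $\Theta^0$ and so that $\mathcal{P}^t x$ is single-valued on it, which is automatic from Lemma \ref{Prop001} together with Proposition \ref{Prop11}(iii).)

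First I would invoke Proposition \ref{Prop11}(iii): for every $(t,x)\in\Theta$ the metric projection onto $\interface(t)$ is the single point
\begin{align*}
\mathcal{P}^t x = x-\phi(t,x)\nabla\phi(t,x).
\end{align*}
Hence the extended velocity field \eqref{2DD} restricted to $\Theta$ is
\begin{align*}
\tilde v(t,x)=v\bigl(t,\mathcal{P}^t x\bigr)=v\bigl(t,x-\phi(t,x)\nabla\phi(t,x)\bigr).
\end{align*}
Substituting this identity into the PDE of \eqref{Sethian-LT} turns
\begin{align*}
\frac{\p \phi}{\p t}(t,x)+\tilde v(t,x)\cdot\nabla\phi(t,x)=0
\end{align*}
into
\begin{align*}
\frac{\p \phi}{\p t}(t,x)+v\bigl(t,x-\phi(t,x)\nabla\phi(t,x)\bigr)\cdot\nabla\phi(t,x)=0,
\end{align*}
which is precisely the PDE in \eqref{S-HJ}, valid in $\Theta|_{t>0}$. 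The initial condition $\phi=\phi^0$ on $\Theta|_{t=0}$ is inherited verbatim from \eqref{Sethian-LT}, so $\phi$ solves \eqref{S-HJ}.

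There is essentially no obstacle here: the content of the proposition is entirely carried by Proposition \ref{Prop11}, and the only thing to check is that its hypotheses are met by the $\phi$ furnished by Claim 1'. The one point deserving a line of care is that the substitution is legitimate pointwise on all of $\Theta|_{t>0}$, i.e. that for each such $(t,x)$ the argument $x-\phi(t,x)\nabla\phi(t,x)$ indeed equals $\mathcal{P}^t x\in\interface(t)\subset\Omega$, so that $v(t,\cdot)$ is being evaluated at a point of its domain; this is exactly what Proposition \ref{Prop11}(iii) guarantees. I would also note in passing that $\phi$ is then $C^2$ and, by Proposition \ref{Prop11}(ii), coincides with the local signed distance function of $\interface(t)$, so \eqref{S-HJ} is being solved in the classical sense — but that observation is not needed for the statement itself. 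This completes the plan.
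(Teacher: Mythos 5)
Your proof is correct and takes essentially the same route the paper intends: the paper states Proposition \ref{Proppp1122} as an immediate consequence of Proposition \ref{Prop11} ("Furthermore, by Proposition \ref{Prop11}, we obtain..."), and your write-up simply makes that substitution explicit by combining \eqref{2DD}, the sign/level-set properties of the Claim 1' solution (which indeed give hypothesis \eqref{1G}), and Proposition \ref{Prop11}(iii) to rewrite $\tilde v(t,x)=v\bigl(t,x-\phi(t,x)\nabla\phi(t,x)\bigr)$ in \eqref{Sethian-LT}.
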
  
We have thus observed that Claim 1' leads to the solvability of \eqref{S-HJ}.       
We will rigorously derive  not only Claim 1' but also  Claim 1 only with (H1) and (H2), by  directly finding $\Theta$ and $C^2$-function satisfying \eqref{S-HJ} and \eqref{1G}.   
Here are the main results of this paper. 
\begin{Thm}\label{main-thm1}
Suppose that initial data $\phi^0$ satisfying   \eqref{2initial}  is $C^2$-smooth with $|\nabla \phi^0|\equiv1$ in a neighborhood of $\interface(0)$ (hence, $\phi^0$ locally coincides with the local signed distance function of $\interface(0)$). 
Then there exist a $t$-global tubular neighborhood $\Theta$ of $\{\interface(t)\}_{t\ge0}$ and  a $C^2$-function $\phi:\Theta\to\R$ solving  \eqref{S-HJ}. Furthermore, $\phi$ satisfies \eqref{1G}. 
\end{Thm}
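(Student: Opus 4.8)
The plan is to solve the fully nonlinear Hamilton--Jacobi equation \eqref{S-HJ} directly by the method of characteristics, following the ideas of \cite{BFS}, and to read off \eqref{1G} from the structure of the characteristic ODEs. Write \eqref{S-HJ} as $\p_t\phi+H(s,x,\phi,\nabla\phi)=0$ with $H(s,x,z,p):=v(s,x-zp)\cdot p$; by (H1) this $H$ carries the regularity needed to run the method of characteristics and to get $C^1$-dependence of the characteristic flow on its data. Along a characteristic $s\mapsto(x(s),z(s),p(s))$, with $z=\phi(s,x(s))$ and $p=\nabla\phi(s,x(s))$, the (reduced) system reads
\begin{align*}
\dot x&=v(s,x-zp)-z\,[\nabla v(s,x-zp)]^{\tra}p,\qquad
\dot z=-z\,\langle\nabla v(s,x-zp)\,p,p\rangle,\\
\dot p&=-[\nabla v(s,x-zp)]^{\tra}p+\langle\nabla v(s,x-zp)\,p,p\rangle\,p .
\end{align*}
I would prescribe, for $\xi$ in a neighborhood $N$ of $\interface(0)$ on which $|\nabla\phi^0|\equiv1$, the data $x(0)=\xi$, $z(0)=\phi^0(\xi)$, $p(0)=\nabla\phi^0(\xi)$, together with $q(0)=-H(0,\xi,\phi^0(\xi),\nabla\phi^0(\xi))$ for the auxiliary slot $q=\p_t\phi$, so that $F:=q+v(s,x-zp)\cdot p$ vanishes at $s=0$ and hence, being conserved along characteristics, for all $s$.

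The second step is a handful of a priori identities along characteristics. Differentiating, $\tfrac12\tfrac{\rm d}{{\rm d}s}|p|^2=\langle\nabla v(s,x-zp)\,p,p\rangle\,(|p|^2-1)$, so $|p(0)|=1$ forces $|p(s)|\equiv1$; this is exactly the mechanism behind \eqref{1pre} and the first line of \eqref{1G}. Granting $|p|\equiv1$, a direct computation shows that $w(s):=x(s)-z(s)p(s)$ satisfies $\dot w=v(s,w)$, whence $w(s)=X(s,0,\eta)$ with $\eta:=\xi-\phi^0(\xi)\nabla\phi^0(\xi)=\mathcal P^0\xi\in\interface(0)$; thus $x(s)-z(s)p(s)$ stays on the material interface $\interface(s)$, in particular away from $\p\Omega$, so that $|\nabla v(s,w(s))|$ is bounded on every finite time interval. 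Consequently $z$ solves a linear scalar ODE, retains the sign of $\phi^0(\xi)$ (which matches $\Omega^\pm(0)$), and obeys $|z(s)|\le|\phi^0(\xi)|\exp\!\int_0^s|\nabla v(r,w(r))|\,{\rm d}r$; together with $|p|\equiv1$ this yields global-in-$s$ solvability of the characteristic system once $N$ is taken thin enough that the characteristics remain in a region where $v$ and its derivatives are controlled.

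I would then build $\Theta$ by the iterative shrinking used in the discussion after Claim~1$'$: on $[0,1]$ continuity of the characteristic flow gives a tube of some thickness $\ep_1>0$ around $\bigcup_{0\le s\le1}(\{s\}\times\interface(s))$; restarting from the slice at $s=1$ and iterating produces a nonincreasing positive function $\ep(\cdot)$ and a $t$-global tubular neighborhood $\Theta$ swept out by the characteristics issuing from a shrinking neighborhood of $\interface(0)$. On (the preimage of) $\Theta$ the characteristic map $\Psi:(s,\xi)\mapsto(s,x(s;\xi))$ is injective and a diffeomorphism onto $\Theta$: its Jacobian is the identity at $s=0$, and non-focusing near the interface is forced by the fact that each $\interface(s)$ is $C^2$ with $C^1$ unit normal field (cf.\ Lemma \ref{Prop001}), so, shrinking $\ep(\cdot)$ further if necessary, $\det\p_\xi x(s;\xi)\neq0$ throughout $\Theta$. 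Set $\phi:=z\circ\Psi^{-1}$. The classical verification for first-order PDEs --- $\nabla\phi(s,x(s;\xi))=p(s;\xi)$, $\p_t\phi(s,x(s;\xi))=q(s;\xi)$, $F\equiv0$ --- shows that $\phi$ solves \eqref{S-HJ}; the a priori identities give $|\nabla\phi|=|p|\equiv1$ in $\Theta$, while $\{x\,:\,\phi(s,x)=0\}$ is the $\Psi_s$-image of $\{\phi^0=0\}=\interface(0)$, i.e.\ $\interface(s)$, with $\phi$ carrying the correct sign on each side; this is \eqref{1G}. The $C^2$-smoothness of $\phi$, not visible from the characteristic formula, is obtained as in Appendix~1 of \cite{BFS} (using $C^1$-dependence of the characteristic flow on data together with the extension of $v$ to $s<0$). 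By Proposition \ref{Prop11} this $\phi$ is the local signed distance function of $\interface(s)$, and $v(s,x-\phi\nabla\phi)=v(s,\mathcal P^s x)=\tilde v(s,x)$ then identifies it with the function of Claim~1.

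The step I expect to be the main obstacle is the \emph{simultaneous, time-uniform} control of: (a) non-focusing of the characteristics, so that $\Psi$ stays a diffeomorphism --- i.e.\ the genuine construction of the nonincreasing $\ep(\cdot)>0$ over all of $[0,\infty)$ rather than merely a finite-time tube --- and (b) the characteristics remaining in a neighborhood of the material interface on which $v$, $\nabla v$, $\nabla^2 v$ are bounded; to this is added the delicate $C^2$-regularity bookkeeping under the minimal hypotheses (H1), where the naive characteristic formula a priori exhibits only $C^1$ smoothness. The remaining ingredients --- the a priori identities and the verification that $\phi$ solves \eqref{S-HJ} and \eqref{1G} --- are routine once the structure above is in place.
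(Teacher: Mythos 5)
Your overall strategy --- method of characteristics for the fully nonlinear problem, conservation laws along characteristics, and an iterative construction of the tube --- is the same as the paper's, which actually deduces Theorem~\ref{main-thm1} from the more general Theorem~\ref{main-thm2} (the PDE \eqref{S-HJ2}). Your direct attack on \eqref{S-HJ} is a legitimate shortcut, and the observation that $w(s):=x(s)-z(s)p(s)$ solves $\dot w=v(s,w)$ and therefore tracks the material interface is an appealing structural simplification that the paper does not exploit in this clean form (it only uses the special case $\xi\in\interface(0)$, where $\Phi\equiv0$). Your conservation $|p|\equiv1$ is the $|\nabla\phi^0|\equiv1$ specialization of the paper's identity $|p(s;\xi)|\equiv|p(0;\xi)|$ (cf.~\eqref{2xsxs2}--\eqref{2xsxs3}); both give the needed a priori control.

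Where your proposal has a genuine gap is in precisely the spot you flag as ``the main obstacle'': you do not actually \emph{prove} that the characteristic map $\Psi(s,\xi)=(s,x(s;\xi))$ is injective with a \emph{uniform} time step allowing iteration to all $t\ge0$. Saying that the Jacobian is the identity at $s=0$ gives local invertibility for small $s$, not a uniform interval; and the appeal to ``non-focusing near the interface'' via Lemma~\ref{Prop001} is circular. Lemma~\ref{Prop001} says the \emph{metric projection} onto $\interface(s)$ is single-valued in a thin tube; to conclude that the characteristics cannot focus, you would need to know that at time $s$ the point $x(s;\xi)=w(s)+z(s)p(s)$ lies at signed normal distance $z(s)$ from $\interface(s)$ with $p(s)$ the unit normal there --- but that is exactly the signed-distance property of $\phi$ you are trying to establish, i.e.\ \eqref{1G} with Proposition~\ref{Prop11} applied \emph{after} it. The paper avoids this circularity altogether: in Steps~1--2 of the proof of Theorem~\ref{main-thm2} it derives purely quantitative bounds $U_0,\dots,U_7$ on the first variational equations of the characteristic flow (uniformly in the iterate, because $|p(s;\xi)|$ has the same upper and lower bounds at the start of each step), fixes the time step $t_*$ by the determinant condition \eqref{t-star}, and proves injectivity by a contradiction argument using a Taylor expansion of $x(t;\tilde\zeta)-x(t;\zeta)$ and the invertibility of $I+A$ with $\|A\|$ controlled by $2U_7t_*$. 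Step~4 then iterates with the same $t_*$ because the constants are unchanged. Without some version of this quantitative injectivity-and-uniform-step argument, your iteration ``produces a nonincreasing $\ep(\cdot)$'' only formally; nothing prevents $\ep(t)\to0$ in finite time. So the plan is right, but the core estimate is missing, and the geometric substitute you propose for it begs the question.
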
  
\noindent The next corollary immediately follows from  Theorem \ref{main-thm1} and Proposition \ref{Prop11}: 
\begin{Cor}[justification of Claim 1]\label{main-thm1-1}
Let $\Theta$ and $\phi$ be as stated in Theorem \ref{main-thm1}. Then $(t,x)\mapsto\mathcal{P}^tx$ is single-valued and $C^1$-smooth.  Furthermore, $\phi$   solves \eqref{Sethian-LT} and $\phi(t,\cdot)$ is the local signed distance function of $\interface(t)$ for each $t\ge0$.
\end{Cor}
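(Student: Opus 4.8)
The plan is that the corollary requires no new construction: it is obtained by feeding the output of Theorem \ref{main-thm1} into Proposition \ref{Prop11} and then unwinding the definition \eqref{2DD} of $\tilde v$. Accordingly the proof has two short steps — first extract the structural information about $\mathcal P^t$ and about $\phi$ from Proposition \ref{Prop11}, then identify the PDE \eqref{S-HJ} with the transport equation \eqref{Sethian-LT}.

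First I would apply Proposition \ref{Prop11} to the pair $(\Theta,\phi)$ furnished by Theorem \ref{main-thm1}. This is legitimate precisely because Theorem \ref{main-thm1} asserts that $\phi$ satisfies \eqref{1G}, which is exactly the hypothesis of Proposition \ref{Prop11}; it is worth emphasizing that the $C^1$-regularity of $\mathcal P^t$ postulated in Claim 1' is here \emph{derived}, not assumed, so the apparent circularity flagged after \eqref{13con} does not occur. Part (iii) of the proposition then yields, on all of $\Theta$, that $\mathcal P^t$ is single-valued, that $(t,x)\mapsto\mathcal P^tx$ is $C^1$-smooth, and that
\[
\mathcal P^tx = x-\phi(t,x)\nabla\phi(t,x)\qquad((t,x)\in\Theta).
\]
In particular $\Theta$ itself may serve as the tubular neighborhood $\Theta^0$ on which the extended field $\tilde v$ of \eqref{2DD} is well defined. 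Part (ii) of the proposition gives that $\phi(t,\cdot)$ is the signed distance function of $\interface(t)$; since $\Theta$ contains a set $\{t\}\times\interface_{\ep(t)}(t)$ with $\ep(\cdot)$ a nonincreasing positive function, this is exactly the statement that $\phi(t,\cdot)$ is the local signed distance function of $\interface(t)$ for every $t\ge0$, and in particular $\phi(0,\cdot)=\phi^0$ is the local signed distance function of $\interface(0)$ on $\Theta|_{t=0}$.

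It then remains to see that $\phi$ solves \eqref{Sethian-LT}. By Theorem \ref{main-thm1}, $\phi$ solves \eqref{S-HJ}, i.e. $\p_t\phi(t,x)+v\big(t,x-\phi(t,x)\nabla\phi(t,x)\big)\cdot\nabla\phi(t,x)=0$ in $\Theta|_{t>0}$ with $\phi=\phi^0$ on $\Theta|_{t=0}$. Substituting the identity $x-\phi(t,x)\nabla\phi(t,x)=\mathcal P^tx$ from Proposition \ref{Prop11}(iii) into \eqref{2DD} gives $v\big(t,x-\phi(t,x)\nabla\phi(t,x)\big)=v(t,\mathcal P^tx)=\tilde v(t,x)$ for every $(t,x)\in\Theta$, so the equation satisfied by $\phi$ is literally $\p_t\phi+\tilde v\cdot\nabla\phi=0$ in $\Theta|_{t>0}$, which together with the unchanged initial condition is \eqref{Sethian-LT}. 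I do not expect a genuine obstacle here — once Theorem \ref{main-thm1} is in hand the corollary is bookkeeping; the only points deserving explicit care are that the regularity of $\mathcal P^t$ and the signed-distance property are \emph{outputs} of Proposition \ref{Prop11} applied to the constructed solution (hence no circularity), and that the neighborhood on which \eqref{Sethian-LT} is now known to hold is the same $t$-global tubular neighborhood $\Theta$ produced by Theorem \ref{main-thm1}.
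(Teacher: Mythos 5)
Your proof is correct and is exactly the argument the paper intends: the paper dismisses the corollary as "immediately follows from Theorem \ref{main-thm1} and Proposition \ref{Prop11}", and you have simply spelled out that application — Proposition \ref{Prop11}(ii),(iii) supply the signed-distance property and the $C^1$, single-valued projection $\mathcal{P}^tx=x-\phi\nabla\phi$, and substituting this into \eqref{S-HJ} turns it into \eqref{Sethian-LT} via the definition \eqref{2DD} of $\tilde v$. Your remark that the regularity of $\mathcal{P}^t$ is derived rather than assumed, so the circularity noted after \eqref{13con} is avoided, is exactly the right thing to emphasize.
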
  
We generalize Theorem \ref{main-thm1} so that one can use initial data which does not necessarily coincide with the local signed distance function of $\interface(0)$. 
\begin{Thm}\label{main-thm2}
Suppose that initial data $\phi^0$ satisfying \eqref{2initial}  is $C^2$-smooth in a neighborhood of $\interface(0)$. 
Then there exist a $t$-global tubular neighborhood $\Theta$ of $\{\interface(t)\}_{t\ge0}$ and  a $C^2$-function $\phi:\Theta\to\R$ solving  
\begin{align}\label{S-HJ2}
&&\left\{
\begin{array}{lll}\medskip 
&\dis \frac{\p  \phi}{\p t}(t,x)+v\Big(t,x-\frac{\phi(t,x)\nabla\phi(t,x)}{|\nabla\phi(t,x)|^2}\Big)\cdot\nabla\phi(t,x)=0\quad\mbox{in $\Theta|_{t>0}$}\\&\phi=\phi^0\quad\mbox{on $\Theta|_{t=0}$}.
\end{array}
\right.
\end{align}
Furthermore, it holds that 
\begin{itemize}
\item $\phi(t,\cdot)>0\mbox{ in $\Theta\cap\Omega^+(t)$ and $\phi(t,\cdot)<0$ in $\Theta\cap\Omega^-(t)$  for each $t\ge0$}$; 
\item $\forall\, (t,x)\in \Theta$ $\exists\,\xi$ such that $(0,\xi)\in\Theta$ and  $|\nabla \phi(t,x)|=|\nabla\phi^0(\xi)| \neq 0$. 
 \end{itemize} 
\end{Thm}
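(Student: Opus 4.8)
The plan is to solve \eqref{S-HJ2} by the method of characteristics, following the scheme of Theorem~\ref{main-thm1} and the approach of \cite{BFS}. Write \eqref{S-HJ2} as $\p_t\phi+H(t,x,\phi,\nabla\phi)=0$ with the Hamiltonian
$$H(t,x,r,p):=v\Big(t,\,x-\frac{r\,p}{|p|^2}\Big)\cdot p,$$
which is $C^1$ in $(t,x)$ and smooth in $(r,p)$ on $\{p\neq0\}$, the only singularity being at $p=0$. By \eqref{2initial} and compactness of $\interface(0)$ there are constants $0<m\le M$ with $m\le|\nabla\phi^0|\le M$ on a neighborhood of $\interface(0)$; this uniform lower bound is what will keep the characteristics away from the singular set of $H$.

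First I would write Hamilton's characteristic system for $(x(t),z(t),p(t))$ associated with $\p_t\phi+H=0$,
$$\dot x=H_p,\qquad \dot z=p\cdot H_p-H,\qquad \dot p=-H_x-H_r\,p,$$
with data $x(0)=\xi$, $z(0)=\phi^0(\xi)$, $p(0)=\nabla\phi^0(\xi)$ for $\xi$ near $\interface(0)$, and then extract the identities special to \eqref{mod1234}. Writing $y:=x-rp/|p|^2$, a direct computation gives $(H_x)_k=\sum_i(\p_{x_k}v_i)(t,y)\,p_i$ and $|p|^2H_r=-\langle\nabla v(t,y)p,p\rangle=-\,p\cdot H_x$, so that along any characteristic
$$\frac{\rm d}{{\rm d}t}|p|^2=2\,p\cdot\dot p=-2\big(p\cdot H_x+|p|^2H_r\big)=0,\qquad \dot z=\frac{\langle\nabla v(t,y)p,p\rangle}{|p|^2}\,z.$$
Hence $|p(t)|\equiv|\nabla\phi^0(\xi)|\in[m,M]$, so $y$ is well defined along the whole characteristic and $H$ never reaches its singularity; moreover $z$ solves a scalar linear homogeneous ODE, so ${\rm sign}\,z(t)\equiv{\rm sign}\,\phi^0(\xi)$, and in particular $z\equiv0$ exactly on the characteristics with $\xi\in\interface(0)$. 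Since $H(t,x,0,p)=v(t,x)\cdot p$, on those we have $\dot x=v(t,x)$, so they coincide with $t\mapsto X(t,0,\xi)$ and sweep out $\bigcup_{t\ge0}\big(\{t\}\times\interface(t)\big)$.

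Next I would run the a priori estimates and build $\Theta$. With $L$ the Lipschitz constant of $v$ in $x$ from (H1), $|\dot z|\le L|z|$ gives $|z(s)|\le|\phi^0(\xi)|\,e^{L|t-s|}$ on $[0,t]$, which together with the bound on $|p|$ controls $|\dot x|$ and $|\dot p|$; using that $\interface(t)$ stays away from $\p\Omega$ and the flow invariance from (H2), this shows that for $\xi$ in a sufficiently thin (exponentially shrinking in $t$) neighborhood of $\interface(0)$ the characteristic exists for all $t\ge0$ and stays where $v$ and its derivatives are controlled. By the exhaustion argument described in the examination of Claim~1$'$, one obtains a $t$-global tubular neighborhood $\Theta$ of $\{\interface(t)\}_{t\ge0}$ foliated by the characteristics emanating from $\Theta|_{t=0}$; setting $\phi(t,x):=z(t;\xi)$, where $\xi=\xi(t,x)$ is defined by $x(t;\xi)=x$, the conservation law gives $|\nabla\phi(t,x)|=|p(t;\xi)|=|\nabla\phi^0(\xi)|\neq0$. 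Finally the interface characteristics give $\{\phi(t,\cdot)=0\}=\interface(t)$ inside $\Theta$, and since ${\rm sign}\,z$ is constant along each characteristic while $\phi^0>0$ on $\Omega^+(0)$ near $\interface(0)$, a connectedness argument yields $\phi(t,\cdot)>0$ on $\Theta\cap\Omega^+(t)$ and $\phi(t,\cdot)<0$ on $\Theta\cap\Omega^-(t)$.

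I expect the main obstacle, exactly as for Theorem~\ref{main-thm1}, to be the regularity part: one must show that for every $t\ge0$ the characteristic map $\xi\mapsto x(t;\xi)$ is a $C^1$-diffeomorphism of a neighborhood of $\interface(0)$ onto a neighborhood of $\interface(t)$ --- i.e.\ that no caustic forms near the moving interface, uniformly on $[0,\infty)$ --- and that the reconstructed $\phi$ is then genuinely $C^2$ up to $\interface(t)$, so that substituting the characteristic relations back really produces $\p_t\phi+H(t,x,\phi,\nabla\phi)=0$ with $\phi|_{t=0}=\phi^0$. In the tangential directions the diffeomorphism property is immediate since $X(t,0,\cdot)$ is a diffeomorphism; in the normal direction one differentiates the characteristic system in $\xi$ and controls the Jacobian, and here the conservation $\frac{\rm d}{{\rm d}t}|p|^2=0$ is decisive because it removes the usual source of gradient blow-up, exactly as in \cite{BFS}; the $C^2$-regularity of $\phi$ then follows from the $C^1$-dependence of the characteristic flow on its initial data, for which the second $x$-derivatives of $v$ postulated in (H1) are precisely what is needed. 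The extra generality over Theorem~\ref{main-thm1} --- initial data need not be the local signed distance function --- costs essentially nothing, since the decisive conservation law holds along \emph{every} characteristic of \eqref{mod1234} and not merely those on the interface; one only has to carry the harmless exponential factor produced by the $z$-equation.
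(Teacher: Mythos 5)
Your outline is essentially the paper's own strategy, and the preliminary computations you carry out are correct: the characteristic system for the Hamiltonian $H(t,x,r,p)=v(t,x-rp/|p|^2)\cdot p$ does conserve $|p|^2$, the value $z$ does obey $\dot z=\langle\nabla v(t,y)p,p\rangle z/|p|^2$ (this is exactly the paper's $\Phi'$-equation after simplifying $B(p)p=-|p|^2p$), and $\operatorname{sign} z$ and the coincidence of the zero-level characteristics with $t\mapsto X(t,0,\xi)$ for $\xi\in\interface(0)$ all follow as you say. Where your proposal stops short, and where you slightly mischaracterize what is required, is in turning these observations into a $t$-global construction. You phrase the remaining obstacle as proving that $\xi\mapsto x(t;\xi)$ is a diffeomorphism near $\interface(0)$ \emph{uniformly on} $[0,\infty)$. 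The paper never proves, and never needs, such a uniform-in-time statement: instead it establishes a priori bounds $U_1,\dots,U_7$ on the variational derivatives $\p x/\p\xi$, $\p\Phi/\p\xi$ that depend only on $v$, $\nabla v$ and on $0<\inf_{\interface(0)}|\nabla\phi^0|\le\sup_{\interface(0)}|\nabla\phi^0|<\infty$, chooses a \emph{fixed} step length $t_\ast$ from these constants, proves the diffeomorphism property on $[0,t_\ast]$, and then \emph{restarts} from $\interface(t_\ast)$. The whole point of the conservation law $|p(t;\xi)|\equiv|\nabla\phi^0(\xi)|$ is that $|\nabla\phi_1(t_\ast,\cdot)|$ on $\interface(t_\ast)$ has exactly the same upper and lower bounds as $|\nabla\phi^0|$ on $\interface(0)$, so the \emph{same} $U_i$ and the \emph{same} $t_\ast$ are usable at every restart; this, and not a global-in-time Jacobian bound, is what makes $\Theta$ $t$-global. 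You gesture at the conservation law being ``decisive'' for iteration, but you do not articulate this mechanism, and your ``exponentially shrinking in $t$'' neighborhood framing suggests a single global estimate rather than the stepwise one.

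The second genuine gap is the injectivity of $\xi\mapsto x(s;\xi)$ within a single step. You say this should follow by ``differentiating the characteristic system in $\xi$ and controlling the Jacobian,'' and that the conservation law removes the usual source of gradient blow-up. That is the right intuition, but the paper's Step 2 is a real argument, not a one-line consequence of a Jacobian bound: it proceeds by contradiction, taking sequences $\zeta_j\neq\tilde\zeta_j\in\interface_{\nu_j}(0)$ with $x(t_j;\zeta_j)=x(t_j;\tilde\zeta_j)$, showing $d_j:=\sup|\tilde\zeta-\zeta|\to0$ via compactness and uniqueness of characteristics on $\interface(0)$, then applying a second-order Taylor expansion in $\xi$ along the segment joining $\zeta_j$ and $\tilde\zeta_j$ to write $0=(I+A)(\tilde\zeta_j-\zeta_j)$ with $|A_{kl}|<2U_7|t_j|$, and finally invoking a determinant lower bound of the form $1-\{3(2U_7t_\ast)+6(2U_7t_\ast)^2+6(2U_7t_\ast)^3\}>0$ to conclude $\tilde\zeta_j=\zeta_j$. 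Nothing in your sketch supplies this; as written, your claim that the characteristic map is a local diffeomorphism near $\interface(0)$ is an assertion, not a proof. In short: same approach, correct key identities, but the two load-bearing technical steps---the fixed-$t_\ast$ iteration justified by the conserved gradient magnitude, and the contradiction/Taylor/determinant argument for injectivity---are left open.
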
  
\noindent We recall that \eqref{mod-PDE1} studied in \cite{BFS} preserves  $|\nabla\phi|$ only on the interface as \eqref{1111pre}. 

It is clear that  Theorem  \ref{main-thm1} follows from Theorem  \ref{main-thm2}.   
Theorem \ref{main-thm2}  deals with phenomena in a neighborhood of the interface. 
Now we consider the issue within the whole domain $\Omega$.
For this purpose,  in the rest of this section, we deal with an extension of $v$ over $[0,\infty)\times\R^3$, still denoted by the same symbol,  based on a classical result on extension of Lipschitz functions (our case is slightly different due to the presence of $t$). 
The next lemma guarantees that our velocity field $v$ satisfying (H1) can be extended over $[0,\infty)\times\R^3$ keeping continuity in $(t,x)$ and Lipschitz continuity in $x$.  
\begin{Lemma}\label{2exten}
Suppose that $v=(v_1,v_2,v_3):[0,\infty)\times\bar{\Omega}\to\R^3$ is continuous in $(t,x)$ and $v_1,v_2,v_3$ are Lipschitz continuous in $x$ with a Lipschitz constant $\lambda\ge0$. 
Then, there exists $\tilde{v}=(\tilde{v}_1,\tilde{v}_2,\tilde{v}_3):[0,\infty)\times\R^3\to\R^3$ being continuous in $(t,x)$ such that  $\tilde{v}\equiv v$ on  $[0,\infty)\times\bar{\Omega}$ and $\tilde{v}_1,\tilde{v}_2,\tilde{v}_3$ are Lipschitz continuous in $x$ with the same Lipschitz constant $\lambda$. The extension is explicitly given for each $(t,x)\in[0,\infty)\times\R^3$ as 
$$\tilde{v}_i(t,x):=\inf_{z\in\bar{\Omega}}\{  v_i(t,z)+\lambda|x-z| \}\quad (i=1,2,3).$$ 
\end{Lemma}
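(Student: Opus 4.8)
The plan is to verify the three claimed properties of the explicit candidate $\tilde v_i(t,x):=\inf_{z\in\bar\Omega}\{v_i(t,z)+\lambda|x-z|\}$: that it is finite and well-defined, that it restricts to $v_i$ on $[0,\infty)\times\bar\Omega$, that it is $\lambda$-Lipschitz in $x$, and that it is jointly continuous in $(t,x)$. This is the classical McShane--Whitney extension formula applied pointwise in $t$, so the only genuinely new point is the joint continuity in $(t,x)$; everything else is standard and I would present it compactly.

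First I would check finiteness and the extension property. Fixing $(t,x)$, boundedness of $\bar\Omega$ and continuity of $v_i(t,\cdot)$ on the compact set $\bar\Omega$ give a finite lower bound for the infimand, so $\tilde v_i(t,x)>-\infty$, and taking any fixed $z_0\in\bar\Omega$ gives a finite upper bound. For $x\in\bar\Omega$, the choice $z=x$ shows $\tilde v_i(t,x)\le v_i(t,x)$, while for any $z\in\bar\Omega$ the Lipschitz bound gives $v_i(t,z)+\lambda|x-z|\ge v_i(t,x)$, hence $\tilde v_i(t,x)\ge v_i(t,x)$; so $\tilde v_i\equiv v_i$ there. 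For the Lipschitz estimate in $x$, for any $x,y\in\R^3$ and any $z\in\bar\Omega$ we have $v_i(t,z)+\lambda|x-z|\le v_i(t,z)+\lambda|y-z|+\lambda|x-y|$; taking the infimum over $z$ yields $\tilde v_i(t,x)\le\tilde v_i(t,y)+\lambda|x-y|$, and by symmetry $|\tilde v_i(t,x)-\tilde v_i(t,y)|\le\lambda|x-y|$.

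The main obstacle is joint continuity in $(t,x)$. The $x$-Lipschitz bound, uniform in $t$, reduces the problem to showing continuity of $t\mapsto\tilde v_i(t,x)$ for fixed $x$, and in fact by the triangle-type estimate $|\tilde v_i(t,x)-\tilde v_i(t',x')|\le|\tilde v_i(t,x)-\tilde v_i(t,x')|+|\tilde v_i(t,x')-\tilde v_i(t',x')|\le\lambda|x-x'|+|\tilde v_i(t,x')-\tilde v_i(t',x')|$ it suffices to bound $\sup_{x}|\tilde v_i(t,x)-\tilde v_i(t',x)|$ by the modulus of continuity of $v_i$ in $t$. The key is that, since $\bar\Omega$ is bounded, the infimum defining $\tilde v_i(t,x)$ is effectively taken over the compact set $\bar\Omega\cap\{|z-x|\le R\}$ with $R$ depending only on $\operatorname{diam}\bar\Omega$ and a fixed reference point; on $[0,T]\times\bar\Omega$ the function $v_i$ is uniformly continuous, so given $\varepsilon>0$ there is $\delta>0$ with $|v_i(t,z)-v_i(t',z)|<\varepsilon$ for all $z\in\bar\Omega$ whenever $|t-t'|<\delta$. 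Then for any $z$ attaining the infimum for $\tilde v_i(t',x)$ up to $\varepsilon$ (such a near-minimizer exists by definition of infimum), $\tilde v_i(t,x)\le v_i(t,z)+\lambda|x-z|\le v_i(t',z)+\varepsilon+\lambda|x-z|\le\tilde v_i(t',x)+2\varepsilon$, and symmetrically; hence $|\tilde v_i(t,x)-\tilde v_i(t',x)|\le2\varepsilon$ uniformly in $x$. Combining with the $\lambda$-Lipschitz bound in $x$ gives joint continuity on $[0,T]\times\R^3$ for every $T>0$, hence on $[0,\infty)\times\R^3$.

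I expect no serious difficulty beyond bookkeeping; the one point to state carefully is the existence of near-minimizers and the reduction of the infimum to a compact subset of $\bar\Omega$, which is what makes the uniform-continuity argument go through uniformly in $x$. Finally I would remark that the same formula, applied to the already-extended-in-$t$ field $v$ from (H1), is what is used in the sequel to regard $v$ as defined on $[0,\infty)\times\R^3$.
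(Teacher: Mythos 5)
Your proof is correct and follows essentially the same route as the paper: the McShane--Whitney formula, the $\lambda$-Lipschitz bound in $x$ uniform in $t$, and continuity in $t$ (uniform in $x$) via uniform continuity of $v_i$ on bounded time intervals times $\bar\Omega$ — the paper simply cites the classical Lipschitz-extension properties instead of reproving them, and obtains the $t$-continuity estimate by inserting $\pm v_i(t,z)$ directly into the infimand and pulling $\sup_{\tilde z\in\bar\Omega}|v_i(t+\delta,\tilde z)-v_i(t,\tilde z)|$ outside, rather than via near-minimizers. One minor simplification for your write-up: since $\Omega$ is bounded, $\bar\Omega$ is already compact, so your preliminary reduction of the infimum to $\bar\Omega\cap\{|z-x|\le R\}$ is superfluous.
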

\begin{proof}
See Appendix. 
\end{proof}
\noindent We remark that, if $v|_{\p\Omega}=0$  (this is the non-slip boundary condition in fluid mechanics satisfying (H2)),  our extension is simply given as the $0$-extension outside $\bar{\Omega}$,  where $v$ is still continuous in $(t,x)$ and Lipschitz continuous in $x$ on $[0,\infty)\times\R^3$. 

In order to allow $|\nabla \phi|$ to be $0$ somewhere in $\Omega$, we modify the term  $x-\frac{\phi(t,x)\nabla\phi(t,x)}{|\nabla\phi(t,x)|^2}$ as 
$$x-\frac{\phi(t,x)\nabla\phi(t,x)}{|\nabla\phi(t,x)|^2+\eta(|\nabla\phi(t,x)|^2)}$$
with a $C^1$-function $\eta:[0,\infty)\to[0,1]$ such that 
 \begin{align*}
\eta(r):=\left\{
\begin{array}{lll}
&0&\quad\mbox{for $r\ge r_\ast$ with some fixed $r_\ast\in(0,1)$}\\
&1&\quad\mbox{for $r=0$}\\
&\mbox{monotone decreasing}&\quad\mbox{otherwise}.
\end{array}
\right.
\end{align*}
\begin{Thm}\label{Thm-viscosity}
Suppose that initial data $\phi^0\in C^0(\bar{\Omega};\R)$ satisfies 
$$\mbox{ $\phi^0>0$ on $\Omega^+(0)$,  \quad$\phi^0<0$ on $\Omega^-(0)$, \quad$\{x\in\Omega\,|\,\phi^0(x)=0\}=\interface(0)$}$$
 and $\phi^0|_{\p\Omega}=0$. 
Then there exists a unique viscosity solution $\phi\in C^0([0,\infty)\times\Omega;\R)$ of 
\begin{align}\label{S-HJm}
&&\left\{
\begin{array}{lll}\medskip
&\dis \frac{\p \phi}{\p t}(t,x)+v\Big(t,x-\frac{\phi(t,x)\nabla\phi(t,x)}{|\nabla\phi(t,x)|^2+\eta(|\nabla\phi(t,x)|^2)}\Big)\cdot\nabla\phi(t,x)=0&\quad\mbox{in $(0,\infty)\times\Omega$}\\ \medskip 
&\phi(0,x)=\phi^0(x)&\quad\mbox{on $\Omega$},\\
&\phi(t,x)=0&\quad\mbox{on $[0,\infty)\times\p\Omega$,}
\end{array}
\right.
\end{align}
i.e., $\phi$ satisfies the PDE in \eqref{S-HJm} in the sense of viscosity solutions and the initial/boundary condition in the classical sense. Furthermore, it holds that 
\begin{align}\label{visin}
 \phi(t,\cdot)>0\mbox{ in $\Omega^+(t)$, $\phi(t,\cdot)<0$ in $\Omega^-(t)$,  $\phi(t,\cdot)=0$ on $\interface(t)$  \quad  ($\forall\,t\ge0$)}.
\end{align}
\end{Thm}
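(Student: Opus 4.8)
\textbf{Proof proposal for Theorem \ref{Thm-viscosity}.}

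The plan is to treat \eqref{S-HJm} as a genuine Hamilton--Jacobi equation with Hamiltonian
\[
H(t,x,r,p):=v\Big(t,\,x-\frac{r\,p}{|p|^2+\eta(|p|^2)}\Big)\cdot p,
\]
and to produce the viscosity solution by a vanishing-viscosity / Perron-type argument combined with the structural information from Theorem \ref{main-thm2}. First I would verify that $H$ is well defined and continuous on $[0,\infty)\times\bar\Omega\times\R\times\R^3$ (here $v$ is extended to $[0,\infty)\times\R^3$ via Lemma \ref{2exten}, so the inner argument $x-rp/(|p|^2+\eta)$ need not stay in $\bar\Omega$): the regularization $\eta$ removes the singularity at $p=0$ since $|p|^2+\eta(|p|^2)\ge \eta(0)$-type lower bound fails at $p=0$ only if $\eta(0)=1>0$ keeps the denominator bounded below by a positive constant on $\{|p|\le r_\ast\}$, and on $\{|p|^2\ge r_\ast\}$ the denominator is $|p|^2\ge r_\ast$; hence $rp/(|p|^2+\eta(|p|^2))$ is globally Lipschitz in $p$ on bounded $r$-sets and the whole map is continuous. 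I would then record the structural conditions needed for the standard comparison principle for Hamilton--Jacobi equations on a bounded domain with Dirichlet data (see Crandall--Ishii--Lions): continuity of $H$, the estimate $|H(t,x,r,p)-H(t,x,r,q)|\le C(R)|p-q|$ for $|r|\le R$ (from Lipschitz continuity of $v$ in $x$ and of $p\mapsto p/(|p|^2+\eta)$), and the modulus-of-continuity bound $|H(t,x,r,\alpha(x-y))-H(t,y,r,\alpha(x-y))|\le \omega(\alpha|x-y|^2+|x-y|)$ needed to close the doubling-of-variables argument; monotonicity in $r$ is not available, but since $H$ is Lipschitz in $r$ uniformly on bounded sets this is handled by the usual $e^{\lambda t}$ change of unknown. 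This yields a comparison principle, hence uniqueness and an $L^\infty$ bound, once existence is in hand.

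For existence I would use Perron's method: exhibit a bounded continuous viscosity subsolution and supersolution agreeing with $\phi^0$ at $t=0$ and with $0$ on $[0,\infty)\times\p\Omega$, then take the sup of all subsolutions. Natural barriers are $\underline u=\min(\phi^0(x)-Lt\cdot\mathbf 1,\dots)$ type functions; more concretely, since $v$ is bounded, say $|v|\le V$, the functions $w^\pm(t,x):=\pm(\|\phi^0\|_\infty + Kt)$ with $K$ large, suitably truncated near $\p\Omega$ to meet the zero boundary data, serve as global super/sub-barriers (the boundary condition is compatible because $\phi^0|_{\p\Omega}=0$ and one builds barriers that vanish on $\p\Omega$, e.g. using a Lipschitz function comparable to $\mathrm{dist}(x,\p\Omega)$ whose cone-type structure dominates $H$). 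Continuity up to $t=0$ and up to $\p\Omega$ follows by constructing localized barriers at each boundary point; the subtangency hypothesis (H2) is not actually needed for the PDE on $\Omega$ here because the Dirichlet condition is imposed in the classical sense, but (H2) guarantees $\interface(t)$ stays interior so that \eqref{visin} makes sense.

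It remains to prove the sign property \eqref{visin}. Here I would invoke Theorem \ref{main-thm2}: on a $t$-global tubular neighborhood $\Theta$ of $\{\interface(t)\}_{t\ge0}$ there is a $C^2$ solution $\psi$ of \eqref{S-HJ2} (note that on $\Theta$, where $|\nabla\psi|$ stays bounded away from $0$ by the second bullet of Theorem \ref{main-thm2} together with a lower bound $\ge \min_\Theta|\nabla\phi^0|>0$, one can choose $r_\ast$ small enough that $\eta(|\nabla\psi|^2)\equiv 0$ there, so $\psi$ is also a classical, hence viscosity, solution of \eqref{S-HJm} on $\Theta$). Every classical solution is both a viscosity sub- and supersolution, so by the comparison principle applied on $\Theta$ with matching lateral data (the lateral boundary of $\Theta$ being a characteristic surface, so the comparison can be localized along characteristics as in Appendix 1 of \cite{BFS}), the global viscosity solution $\phi$ coincides with $\psi$ on $\Theta$; since $\psi>0$ on $\Theta\cap\Omega^+(t)$, $\psi<0$ on $\Theta\cap\Omega^-(t)$, $\psi=0$ on $\interface(t)$, this transfers to $\phi$ near the interface. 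Away from the interface one argues by continuity and connectedness: $\phi(t,\cdot)$ cannot vanish on $\Omega^\pm(t)$ because a new zero would have to be connected to $\interface(t)$ (the zero set of $\phi(t,\cdot)$ must separate $\Omega^+(t)$ from $\Omega^-(t)$, and near $\interface(t)$ it is exactly $\interface(t)$), or one uses a maximum-principle argument on the regions $\Omega^\pm(t)\setminus\Theta$ where $\phi$ has a strict sign on the parabolic boundary. \textbf{The main obstacle} I anticipate is precisely this last step: getting the comparison principle to localize correctly on the tubular neighborhood $\Theta$ (whose lateral boundary is not a fixed cylinder but moves with the characteristics) so as to rigorously identify $\phi$ with the smooth solution $\psi$ near $\interface(t)$ — this is where the finite-speed-of-propagation structure of the first-order equation, handled via the characteristic foliation of $\Theta$ as in \cite{BFS}, must be used carefully, rather than any abstract global comparison theorem.
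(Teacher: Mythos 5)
Your overall skeleton (Perron's method plus comparison after an $e^{\lambda t}$ change of unknown) matches the paper, but the barriers you propose are too crude and leave a genuine gap in the proof of the sign property \eqref{visin}. With generic barriers like $\pm(\|\phi^0\|_\infty+Kt)$ truncated near $\partial\Omega$, Perron's method gives you \emph{some} viscosity solution $\phi$, but nothing about where it vanishes. You then try to recover \eqref{visin} by comparing $\phi$ with the smooth solution $\psi$ of Theorem \ref{main-thm2} on the tubular neighborhood $\Theta$, and then extend by a connectedness or maximum-principle argument on $\Omega^\pm(t)\setminus\Theta$. Both steps are unresolved: localizing the comparison principle to the characteristically foliated, non-cylindrical domain $\Theta$ is exactly the content of the paper's Theorem \ref{Thm-comparison}, which is a separate nontrivial result proved \emph{after} this theorem with its own penalization machinery (the auxiliary function $h(\bar u^2)$ confining the test point to the interior of $\Gamma_T$); and on $\Omega^\pm(t)\setminus\Theta$ the ``new zero must connect to $\interface(t)$'' claim does not follow from viscosity-solution theory alone for a first-order equation on a time-dependent region whose lateral boundary is not characteristic.

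The paper sidesteps all of this by choosing barriers that already carry the sign information. Setting $S(t,x)=-V_0$ on $\overline{\Omega^+(t)}$ and $+V_0$ on $\Omega^-(t)$, the functions
\[
\rho(t,x)=\phi^0(X(0,t,x))\,e^{\int_0^t S(s,X(s,t,x))\,{\rm d}s},\qquad
\tilde\rho(t,x)=\phi^0(X(0,t,x))\,e^{-\int_0^t S(s,X(s,t,x))\,{\rm d}s}
\]
are shown to be a viscosity sub- and supersolution of \eqref{S-HJm}, using the one-line estimate \eqref{4esjo} that the Lipschitz constant of $v$ controls the error between $v(t,x-\cdot)$ and $v(t,x)$. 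These barriers vanish identically on $\cup_t(\{t\}\times\interface(t))$ (because $\phi^0$ does at $t=0$ and the sign of $\phi^0(X(0,t,x))$ is transported), vanish on $[0,\infty)\times\partial\Omega$ by (H2) and $\phi^0|_{\partial\Omega}=0$, and have the signs required in \eqref{visin}. Perron sandwiching $\rho\le\phi\le\tilde\rho$ then delivers existence, the initial/boundary conditions, \emph{and} \eqref{visin} simultaneously; uniqueness and continuity follow from comparison on the fixed cylinder $(0,\infty)\times\Omega$ after the monotonizing change of variable $w=e^{-V_0 t}\phi$, which you also anticipated. The lesson is that for this equation the entire geometric content of \eqref{visin} should be built into the Perron barriers rather than recovered a posteriori.
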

\noindent We remark that $|\nabla\phi^0|_{\interface(0)}\neq0$ is not assumed in Theorem \ref{Thm-viscosity};  
\eqref{S-HJm} has a unique viscosity solution independently from the two-phase flow setting with an interface. 
If we add a cut-off function to the PDE in \eqref{S-HJm} so that the nonlinear term $-\frac{\phi\nabla\phi}{|\nabla\phi|^2+\eta(|\nabla\phi|^2)}$ vanishes in a small neighborhood of $\p\Omega$, i.e., 
$$\frac{\p \phi}{\p t}(t,x)+v\Big(t,x-\tilde{\eta}(t,x)\frac{\phi(t,x)\nabla\phi(t,x)}{|\nabla\phi(t,x)|^2+\eta(|\nabla\phi(t,x)|^2)}\Big)\cdot\nabla\phi(t,x)=0$$
with such a cut-off function $\tilde{\eta}$, 
we may drop the assumption $\phi^0|_{\p\Omega}=0$, where the boundary condition ``$\phi(t,x)=0$ on $[0,\infty)\times\p\Omega$'' must be replaced by  ``$\phi(t,x)=\phi^0(X(0,t,x))$ on $[0,\infty)\times\p\Omega$''  (such a setting is used in \cite{BFS}). 

Finally, we state partial regularity of the viscosity solution obtained in Theorem \ref{Thm-viscosity}.   Suppose that initial data $\phi^0$ satisfies the condition in Theorem \ref{main-thm2}. 
Then there exists $\ep_0>0$ such that 
$$\interface_{\ep_0}(0)\subset\Omega,\,\,\,\inf_{\xi\in\interface_{\ep_0}(0)}|\nabla \phi^0(\xi)|>0,$$
where the solution  $\phi:\Theta\to\R$ obtained in Theorem \ref{main-thm2} is constructed so that $\Theta|_{t=0}\subset \{0\}\times \interface_{\ep_0}(0)$.
Let the constant $r_\ast$ in the definition of $\eta$ be such that  
\begin{align}\label{eta2}
r_\ast\in(0,1),\quad \sqrt{r_\ast}\le \frac{1}{2} \inf_{\xi\in\interface_{\ep_0}(0)}|\nabla \phi^0(\xi)|. 
\end{align}
Then the solution $\phi:\Theta\to0$ stated in Theorem \ref{main-thm2}  locally satisfies the PDE in \eqref{S-HJm} in the classical sense, 
 since $\eta(|\nabla\phi(t,x)|^2)=0$ in $\Theta$.  
Hence, it makes sense to locally compare the solutions in Theorem \ref{main-thm2} and Theorem \ref{Thm-viscosity}. 
\begin{Thm}\label{Thm-comparison}
Suppose that the initial data $\phi^0\in C^0(\Omega;\R)$ satisfies $\phi^0|_{\p\Omega}=0$ and the condition stated in Theorem \ref{main-thm2}. Suppose also \eqref{eta2}. Let $\phi, \Theta$ be as stated in  Theorem \ref{main-thm2} and $\tilde{\phi}$ denote the viscosity solution provided by Theorem \ref{Thm-viscosity}. Then there exists a $t$-global tubular neighborhood $\tilde{\Theta}\subset \Theta$ in which $\phi\equiv\tilde{\phi}$ holds.   
\end{Thm}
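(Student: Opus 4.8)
The plan is to compare the two solutions along characteristics and invoke uniqueness of each to conclude they agree on a suitably shrunk tube. The key observation is that, by the choice \eqref{eta2} of $r_\ast$, the $C^2$-solution $\phi$ from Theorem \ref{main-thm2} satisfies $|\nabla\phi(t,x)|\ge\sqrt{r_\ast}$ everywhere on $\Theta$: indeed, the second bullet of Theorem \ref{main-thm2} gives $|\nabla\phi(t,x)|=|\nabla\phi^0(\xi)|$ for some $\xi$ with $(0,\xi)\in\Theta\subset\{0\}\times\interface_{\ep_0}(0)$, and $|\nabla\phi^0(\xi)|\ge\inf_{\interface_{\ep_0}(0)}|\nabla\phi^0|\ge 2\sqrt{r_\ast}>\sqrt{r_\ast}$. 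Consequently $\eta(|\nabla\phi(t,x)|^2)=0$ throughout $\Theta$, so $\phi$ solves the PDE of \eqref{S-HJm} classically on $\Theta$, as already noted in the excerpt just before the statement.

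First I would use the method of characteristics for the fully nonlinear first-order PDE in \eqref{S-HJm}, following the construction in \cite{BFS}: the characteristic system associated with the Hamiltonian $H(t,x,p)=v\big(t,x-\tfrac{\phi p}{|p|^2+\eta(|p|^2)}\big)\cdot p$ (with the argument constraint $\phi$ replaced by the running value of the solution along the characteristic) generates, from initial points $(0,\xi)$ with $\xi$ near $\interface(0)$, a family of curves on which $\phi$, $\nabla\phi$ and $\p_t\phi$ evolve by ODEs. Because $H$ is of the same structural type as the Hamiltonian treated in Theorem \ref{main-thm2}, and since along these characteristics $|\nabla\phi|$ stays pinned to its initial value $|\nabla\phi^0(\xi)|\ge 2\sqrt{r_\ast}$ — hence $\eta\equiv0$ is maintained — the characteristic flow for \eqref{S-HJm} initialized near $\interface(0)$ coincides with the characteristic flow used to build $\phi$ in Theorem \ref{main-thm2}. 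I would then carve out a $t$-global tubular neighborhood $\tilde\Theta\subset\Theta$ exactly as in the excerpt's construction of $\Theta$ (starting from $\interface_{\ep_1}(0)$, iterating over unit time intervals, extracting nested $\ep_i$), with the additional requirement that every characteristic of \eqref{S-HJm} issuing from $\tilde\Theta|_{t=0}$ stays within $\Theta$ for all relevant times; continuity of the flow makes this possible.

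Next I would argue that the restriction of the viscosity solution $\tilde\phi$ from Theorem \ref{Thm-viscosity} to $\tilde\Theta$ is a classical $C^2$-solution there: on $\tilde\Theta$ the Hamiltonian has no singularity (since $|\nabla|\ge\sqrt{r_\ast}$ along the relevant characteristics keeps the denominator bounded away from $0$) and the characteristic expansion provides a $C^2$-function solving the PDE pointwise with the correct initial data. A classical solution of a first-order PDE is in particular a viscosity solution; by the comparison principle / uniqueness of viscosity solutions for \eqref{S-HJm} — which is what Theorem \ref{Thm-viscosity} asserts — $\tilde\phi$ must coincide with this classical solution on $\tilde\Theta$. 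But that classical solution is, by the coincidence of the two characteristic flows, exactly $\phi$. Hence $\phi\equiv\tilde\phi$ on $\tilde\Theta$, and \eqref{visin} together with the first bullet of Theorem \ref{main-thm2} confirms $\tilde\Theta$ is a genuine tubular neighborhood of $\{\interface(t)\}_{t\ge0}$.

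The main obstacle I anticipate is the matching of domains of definition: the viscosity solution $\tilde\phi$ lives globally on $[0,\infty)\times\Omega$ but need not be smooth off $\tilde\Theta$, while $\phi$ is only defined on $\Theta$, so one cannot simply apply a global comparison argument — the comparison must be localized to $\tilde\Theta$, which requires knowing that characteristics of \eqref{S-HJm} started in $\tilde\Theta|_{t=0}$ do not escape $\Theta$ before they are needed, and that no interaction with $\p\Omega$ (where the boundary condition $\tilde\phi=0$ is imposed) contaminates the region near $\interface(t)$. Handling this properly means choosing the radius function $\ep(t)$ of $\tilde\Theta$ small enough, uniformly in $t$, using that $\interface(t)$ stays away from $\p\Omega$ by (H2) and that the cut-off-free region for $\eta$ is an open neighborhood of $\bigcup_t\{t\}\times\interface(t)$; the flow-invariance arguments from Lemma 2.1 and Appendix 2 of \cite{BFS} should supply the needed uniform control.
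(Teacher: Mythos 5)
Your argument has a genuine gap at the central step. You claim that ``the restriction of the viscosity solution $\tilde\phi$ from Theorem \ref{Thm-viscosity} to $\tilde\Theta$ is a classical $C^2$-solution there,'' but this is precisely the partial-regularity assertion that the theorem is supposed to prove; it cannot be taken for granted. The method of characteristics produces \emph{a} local classical solution on $\tilde\Theta$ (and that solution is $\phi$), but a priori there is no reason why the globally defined viscosity solution $\tilde\phi$ should be smooth on $\tilde\Theta$, and nothing in your argument shows that it is. Likewise, the ``comparison principle / uniqueness'' you invoke from Theorem \ref{Thm-viscosity} is a comparison on the whole space-time cylinder $[0,\infty)\times\Omega$ with data fixed on $(\{0\}\times\bar\Omega)\cup([0,\infty)\times\p\Omega)$; it cannot be applied to compare $\phi$, which lives only on $\Theta$, with $\tilde\phi$. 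To localize a comparison argument to $\tilde\Theta$ you would need to know that $\phi$ and $\tilde\phi$ are ordered on the lateral boundary of $\tilde\Theta$, which is exactly the information you do not have. You recognize this obstacle in your last paragraph, but pointing to flow-invariance of $\tilde\Theta$ does not resolve it: the issue is not whether characteristics escape, but that the viscosity comparison machinery has no lateral boundary data to work with.

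The paper's proof supplies the missing mechanism by a penalized localization. After the monotonizing change of variable $w=e^{-V_0t}\phi$, $\tilde w=e^{-V_0t}\tilde\phi$, it works on a finite-time tube $\Gamma_T$ foliated by the level sets $A_m$ of $\bar u = e^{\alpha t}w$, and maximizes $F_\lambda(t,x)=\tilde w(t,x)-w(t,x)-\lambda t-h(\bar u(t,x)^2)$ over $\Gamma_T$ with a barrier $h$ that equals $3M$ near $A_{\pm\bar m}$. This forces any maximizer with $F_\lambda>0$ into the interior of $\Gamma_T$ (and off $t=0$), so the viscosity subsolution test for $\tilde w$ with the $C^1$ test function $\psi=w+\lambda t+h(\bar u^2)$ becomes available \emph{without} boundary information. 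Combining that test with the classical equation for $w$, the monotonicity of $G$, and the structural estimates \eqref{G3}--\eqref{G4}, and finally choosing $\alpha$ via \eqref{3alpha}, yields a strict contradiction. That barrier/penalization step is the essential ingredient your proposal is missing, and it is what converts the would-be circular ``$\tilde\phi$ is smooth, hence equals $\phi$'' into a genuine argument.
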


\setcounter{section}{2}
\setcounter{equation}{0}
\section{Proof of main results}
We prove Theorem  \ref{main-thm2}--\ref{Thm-comparison} based on the methods developed in \cite{BFS}.
For the readers' convenience, we give self-contained presentations.  
\subsection{Proof of Theorem  \ref{main-thm2}}

The PDE in \eqref{S-HJ2} is written in the form of the Hamilton-Jacobi equation 
\begin{eqnarray}\label{HJ22}
\frac{\p \phi}{\p t}(t,x)+H(t,x,\nabla \phi(t,x),\phi(t,x))=0
\end{eqnarray}
 generated by the Hamiltonian $H$ defined on a subset of $\R\times\R^3\times\R^3\times \R$ as
\begin{align*}
H(t,x,p,\Phi):=v\Big(t,x-\Phi\frac{p}{|p|^2}\Big)\cdot p.
\end{align*}
We recall that, due to (H1), $v$ can be smoothly extended to $t<0$.
Hence, the characteristic system of ODEs to  \eqref{S-HJ2} is well-defined also for $t<0$ (not necessarily within $(-\infty,0]$), which is (implicitly) necessary for  the upcoming application of the inverse function theorem in the  method of characteristics. 
Set 
\begin{align*}
B(p)=B(p_1,p_2,p_3):=\begin{bmatrix}
-p_1^2+p_2^2+p_3^2& -2 p_1p_2 & -2p_1p_3\\
-2p_1p_2& p_1^2-p_2^2+p_3^2&-2 p_2p_3\\
-2p_1p_3&-2p_2p_3&p_1^2+p_2^2-p_3^2
\end{bmatrix}.
\end{align*}   
The characteristic system of ODEs to \eqref{S-HJ2} is given as
\begin{align}\label{2chara}
 x'(s)&= \frac{\p H}{\p p}(s,x(s),p(s),\Phi(s))\\\nonumber
 &=v\Big(s,x(s)-\Phi(s)\frac{p(s)}{|p(s)|^2}\Big) -\frac{\Phi(s)}{|p(s)|^4}\Big[ \nabla v\Big(s,x(s)-\Phi(s)\frac{p(s)}{|p(s)|^2}\Big)B(p(s)) \Big]^\tra p(s), \\\label{2chara2}
 p'(s)&=-\frac{\p H}{\p x} (s,x(s),p(s),\Phi(s))-\frac{\p H}{\p\Phi}(s,x(s),p(s),\Phi(s))p(s)\\\nonumber
&=-\Big[\nabla v\Big(s,x(s)-\Phi(s)\frac{p(s)}{|p(s)|^2}\Big)\Big]^\tra p(s)\\\nonumber
&\quad +\Big<\Big[\nabla v\Big(s,x(s)-\Phi(s)\frac{p(s)}{|p(s)|^2}\Big)\Big]^\tra\frac{p(s)}{|p(s)|^2}, p(s)\Big>p(s),\\\label{2chara3}
\ \Phi'(s)&= \frac{\p H}{\p p}(s,x(s),p(s),\Phi(s))\cdot p(s)-H(s,x(s),p(s),\Phi(s)) \\\nonumber
&=  -\frac{\Phi(s)}{|p(s)|^4}\Big<\Big[ \nabla v\Big(s,x(s)-\Phi(s)\frac{p(s)}{|p(s)|^2}\Big)B(p(s)) \Big]^\tra p(s) ,p(s)\Big>, \\\label{2chara4}
&\!\!\!\!x(0)=\xi,\quad p(0)=\nabla\phi^0(\xi),\quad \Phi(0)=\phi^0(\xi),
\end{align}
where  initial condition \eqref{2chara4} is given only for $\xi$ such that $\nabla\phi^0(\xi)\neq0$, $\xi-\frac{\phi^0(\xi)\nabla\phi^0(\xi)}{|\nabla\phi^0(\xi)|^2}\in\Omega$ (the first one is to avoid $\frac{p(0)}{|p(0)|^2}$ with $p(0)=0$; the second one is necessary since $v$ is not defined outside $\bar{\Omega}$).  The solution of  \eqref{2chara}--\eqref{2chara3} shall be denoted by  $x(s;\xi),p(s;\xi),\Phi(s;\xi) $.
Due to the condition $\nabla\phi^0\neq0$ on $\interface(0)$,  there exists $\ep_0>0$ such that 
$$\interface_{\ep_0}(0)\subset\Omega,\quad \inf_{\xi\in\interface_{\ep_0}(0)}|\nabla \phi^0(\xi)|>0.$$
In \eqref{2chara4}, $\xi$ is taken from $\interface_{\ep_0}(0)$.  
We demonstrate an iteration of the methods of characteristics (the following Step 1--4) within a  shrinking neighborhood of the interface.
%

{\bf Step 1.} {\it  A priori estimates for the characteristic system of ODEs.} 
As long as  the solution of  \eqref{2chara}--\eqref{2chara3} exists (including $s<0$), it holds that
\begin{align}\label{1xsxs}
\Phi(s;\xi)&=\Phi(0;\xi)=0\quad (\forall\, \xi \in\interface(0)),\\\label{2xsxs}
x'(s;\xi)& = v(s,x(s;\xi)), \quad x(s;\xi)\in\interface(s) \quad (\forall\, \xi \in\interface(0)),\\\label{1xsxs2}
{\rm sign}\,\Phi(s;\xi)&={\rm sign}\,\Phi(0;\xi)={\rm sign}\,\phi^0(\xi)\quad({\rm sign}\,0:=1)\quad (\forall\, \xi \in\interface_{\ep_0}(0)),\\\label{2xsxs2}
p'(s;\xi)\cdot p(s;\xi)&=-\Big<\Big[\nabla v\Big(s,x(s)-\Phi(s)\frac{p(s)}{|p(s)|^2}\Big)\Big]^\tra p(s),p(s)\Big>\\\nonumber
&\!\!\!\!\!\!\!\!\!\!\!\!\!\!\!\!\!\!\!\!\!+\Big<\nabla \Big[v\Big(s,x(s)-\Phi(s)\frac{p(s)}{|p(s)|^2}\Big)\Big]^\tra\frac{p(s)}{|p(s)|^2}, p(s)\Big>|p(s)|^2=0 \quad (\forall\, \xi \in\interface_{\ep_0}(0)),\\\label{2xsxs3}
|p(s;\xi)|^2&=|p(0;\xi)|^2=|\nabla \phi^0(\xi)|^2\neq0 \quad (\forall\, \xi \in\interface_{\ep_0}(0)). 
\end{align}
Note that  \eqref{2chara}--\eqref{2chara3} with $\xi\in\interface(0)$ is  solvable for all $s\ge0$ due to (H2) and \eqref{2xsxs}, and up to some $s<0$.
For each $\xi \in\interface(0)$, the variational equations for $\frac{\p x}{\p\xi}(s)=\frac{\p x}{\p\xi}(s;\xi)$ and $\frac{\p \Phi}{\p\xi}(s)=\frac{\p \Phi}{\p\xi}(s;\xi)$ are given as 
\begin{align}\label{1vvv2}
&\frac{\rm d}{{\rm d} s}\frac{\p x}{\p\xi_i}(s)=\nabla v(s,x(s))\frac{\p x}{\p\xi_i}(s)- \nabla v(s,x(s))\frac{p(s)}{|p(s)|^2}\frac{\p\Phi}{\p\xi_i}(s)  \\\nonumber
&\quad  -\Big[ \nabla v(s,x(s))B(p(s)) \Big]^\tra \frac{p(s)}{|p(s)|^4}\frac{\p \Phi}{\p\xi_i}(s),\\ \nonumber
&\frac{\p x}{\p\xi_i}(0)=e^i\quad(i=1,2,3),\\\label{2vvv2}
&\frac{\rm d}{{\rm d} s}\frac{\p\Phi}{\p\xi}(s)=  -\frac{1}{|p(s)|^4}\Big<\Big[ \nabla v(s,x(s))B(p(s)) \Big]^\tra p(s) ,p(s)\Big>\frac{\p \Phi}{\p\xi}(s),\\\nonumber  
&\frac{\p \Phi}{\p\xi}(0)=\nabla\phi^0(\xi),
\end{align}
where $e^1=(1,0,0),e^2=(0,1,0),e^3=(0,0,1)$.  
Hereafter, $U_0,U_1,U_2,U_3,U_4, U_5,U_6,U_7$ will be constants depending only on $v$, $\nabla v$ and  $0<\inf_{\interface(0)}|\nabla\phi^0|\le\sup_{\interface(0)}|\nabla\phi^0|<\infty$. 
Due to (H1) and $|p(s)|=|p(0)|$, as long as  solutions exist,  
it holds that for any $\xi\in\interface(0)$,
\begin{align*}
&|\nabla v(s,x(s)) q|\le U_0|q|,\quad |\langle\nabla v(s,x(s)) q,q\rangle|\le U_1|q|^2\quad (\forall\,q\in\R^3),\\
&\Big|\nabla v(s,x(s))\frac{p(s)}{|p(s)|^2}\Big|\le U_2,\\
&\Big| \Big[ \nabla v(s,x(s))B(p(s)) \Big]^\tra \frac{p(s)}{|p(s)|^4}  \Big|\le U_3,\\
& \Big| \frac{1}{|p(s)|^4}\Big<\Big[ \nabla v(s,x(s))B(p(s)) \Big]^\tra p(s) ,p(s)\Big> \Big|\le U_4.
\end{align*}
Hence, \eqref{2vvv2} provides
\begin{align*}
&\Big|\frac{\p\Phi}{\p\xi}(s)\Big|=\Big|\nabla \phi^0(\xi) \exp\Big(\int_0^s  -\frac{1}{|p(\tau)|^4}\langle[ \nabla v(\tau,x(\tau))B(p(\tau)) ]^\tra p(\tau) ,p(\tau)\rangle  {\rm d}\tau \Big)\Big|\le  U_5e^{U_4|s| }.
\end{align*}
For each $i=1,2,3$ and $\xi \in\interface(0)$,  \eqref{1vvv2} provides
\begin{align*}
\frac{\rm d}{{\rm d} s}\frac{\p x}{\p\xi_i}(s)\cdot \frac{\p x}{\p\xi_i}(s)&=\Big<\nabla v(s,x(s))\frac{\p x}{\p\xi_i}(s), \frac{\p x}{\p\xi_i}(s)\Big> - \Big<\nabla v(s,x(s))\frac{p(s)}{|p(s)|^2}\frac{\p\Phi}{\p\xi_i}(s), \frac{\p x}{\p\xi_i}(s)\Big> \\\nonumber
&\quad  -\Big<\Big[ \nabla v(s,x(s))B(p(s)) \Big]^\tra \frac{p(s)}{|p(s)|^4}\frac{\p \Phi}{\p\xi_i}(s), \frac{\p x}{\p\xi_i}(s)\Big>,\\
\2\frac{\rm d}{{\rm d} s}\Big|\frac{\p x}{\p\xi_i}(s)\Big|^2
&\le U_1\Big|\frac{\p x}{\p\xi_i}(s)\Big|^2+U_2U_5e^{U_4|s|}\Big|\frac{\p x}{\p\xi_i}(s)\Big|+U_3U_5e^{U_4|s|}\Big|\frac{\p x}{\p\xi_i}(s)\Big|\\
&\le  (1+U_1) \Big|\frac{\p x}{\p\xi_i}(s)\Big|^2+  \frac{U_2^2+U_3^2}{2}U_5^2e^{2U_4|s|}.
\end{align*}
Gronwall's inequality yields 
\begin{align}\label{est21}
\Big|\frac{\p x}{\p\xi_i}(s;\xi)\Big|^2&\le U^2_6\quad  (\forall\, s\in[0,1],\,\,\,\forall\,\xi\in\interface(0)).
\end{align}
It follows from \eqref{1vvv2} and \eqref{est21} that
\begin{align}\label{est22}
&\dis \Big|\frac{\rm d}{{\rm d} s}\frac{\p x}{\p\xi_i}(s;\xi)\Big|\le U_0 \Big|\frac{\p x}{\p\xi_i}(s;\xi)\Big|+(U_2+U_3)U_5 e^{U_4|s|}\le U_7 \quad (\forall\, s\in[0,1],\,\,\,\forall\,\xi\in\interface(0)).
\end{align}
The continuity in \eqref{2chara}--\eqref{2chara4} with respect to initial data and the estimates  \eqref{est21}--\eqref{est22} imply that there exist $\ep_1\in(0,\ep_0]$ and $0<\delta\ll1$ such that
\begin{eqnarray}\nonumber
&&\mbox{\eqref{2chara}-\eqref{2chara4} is solvable  for  all $s\in[-\delta,1+\delta]$ and  all $\xi\in \interface_{\ep_1}(0)$},\\\label{est23}
&&
\Big|\frac{\rm d}{{\rm d} s}\frac{\p x}{\p\xi_i}(s;\xi)\Big|< 2U_7 \quad (\forall\, s\in[-\delta,1+\delta],\,\,\, \forall\,\xi\in\interface_{\ep_1}(0)).
\end{eqnarray}
Let $t_\ast\in(0,1]$ be a number such that
\begin{eqnarray}\label{t-star}
1- \{3\cdot (2U_7 t_\ast)+6\cdot (2U_7 t_\ast)^2+6\cdot(2U_7 t_\ast)^3\}>0.
\end{eqnarray}
We make the above $\delta>0$ smaller so that 
$$1- \{3\cdot (2U_7 |s|)+6\cdot (2U_7 |s|)^2+6\cdot(2U_7 |s|)^3\}>0\quad (\forall\,s\in[-\delta,t^\ast+\delta]).$$ 
\indent {\bf Step 2.} {\it The injectivity of $\xi\mapsto x(s;\xi)$, i.e.,}    
\begin{align}\label{2contra}
&\mbox{\it \quad  $\exists\, \tilde{\ep}_1\in(0,\ep_1]$ s.t.  $\interface_{\tilde{\ep}_1}(0)\ni\xi\mapsto x(s;\xi)$ is injective for each $s\in[-\delta, t_\ast+\delta]$}.
\end{align}
We  prove \eqref{2contra} by contradiction.
Suppose that \eqref{2contra} does not hold. Then, there exists  a sequence $\{\nu_j\}_{j\in\N}\subset(0,\ep_1]$  with $\nu_j\to0$ as $j\to\infty$ and $\{t_j\}_{j\in\N}\subset[-\delta,t_\ast+\delta]$ such that for each $j\in\N$  
\begin{eqnarray}\label{22contra}
\exists\,\zeta,\tilde{\zeta}\in \interface_{\nu_j}(0)\mbox{ such that }\zeta\neq\tilde{\zeta},\,\,\,x(t_j;\zeta)=x(t_j;\tilde{\zeta}).
\end{eqnarray}
Set $d_j :=\sup\{ |\tilde{\zeta}-\zeta|\,|\,  \eqref{22contra}\mbox{ holds}  \}$. We check that $d_j\to0$ as $j\to\infty$. If not, there exist $\eta>0$, $s\in[-\delta,t^\ast+\delta]$ and a subsequence of $\{(d_j,t_j)\}_{j\in\N}$ (still denoted by the same symbol) such that $d_j\ge \eta$ for all $j$ and $t_j\to s$ as $j\to\infty$.
Then, for each $j\in\N$, we find  $\zeta_j$ and $\tilde{\zeta}_j$ such that
$$|\zeta_j-\tilde{\zeta}_j|\ge \frac{\eta}{2},\quad {\rm dist}(\zeta_j,\interface(0))\le \nu_j,\quad
{\rm dist}(\tilde{\zeta}_j,\interface(0))\le \nu_j.
$$
Up to a subsequence, $(\zeta_j, \tilde{\zeta}_j)$ converges to some $(\xi,\tilde{\xi})\in \interface(0)\times \interface(0)$ with $|\xi-\tilde{\xi}|\ge \frac{\eta}{2}$ as $j\to\infty$. 
The limit $j\to\infty$ in $x(t_j;\zeta_j)=x(t_j;\tilde{\zeta}_j)$ leads to 
$$\mbox{$x(s;\xi)=x(s;\tilde{\xi})$ with $\xi,\tilde{\xi}\in\interface(0)$, $\xi\neq\tilde{\xi}$},$$
which is a contradiction due to the unique solvability of  \eqref{2xsxs}. 
Thus,  $d_j\to0$ as $j\to\infty$. 
For each $j$ sufficiently large, $\zeta=\zeta_j,\tilde{\zeta}=\tilde{\zeta}_j$ in \eqref{22contra} are included in the $\ep_1$-neighborhood of some $\xi_j\in\interface(0)$. 
 For $i=1,2,3$,  Taylor's approximation within the $\ep_1$-neighborhood of $\xi_j$ provides with some $\lambda_{ji},  \lambda'_{ji}\in (0,1)$, 
\begin{eqnarray*}
0&=&x_i(t_j;\tilde{\zeta}_j)-x_i(t_j;\zeta_j)\\
&=&x_i(0;\tilde{\zeta}_j)-x_i(0;\zeta_j)+(x_i(t_j;\tilde{\zeta}_j)- x_i(t_j;\zeta_j))-(x_i(0;\tilde{\zeta}_j)-x_i(0;\zeta_j))\\
&=&(\tilde{\zeta}_j-\zeta_j)_i+\frac{{\rm d}x_i}{{\rm d} s}(\lambda_{ji}t_j;\tilde{\zeta}_j)t_j- \frac{{\rm d} x_i}{{\rm d} s}(\lambda_{ji}t_j;\zeta_j))t_j, \\
&=&(\tilde{\zeta}_j-\zeta_j)_i+\frac{\rm d}{{\rm d} s}\frac{\p x_i}{\p\xi}(\lambda_{ji}t_j;\tilde{\zeta}_j +\lambda'_{ji}(\tilde{\zeta}_j -\zeta_j ) )t_j\cdot (\tilde{\zeta}_j-\zeta_j),
\end{eqnarray*}
where $(y)_i$ stands for the $i$-th component of $y\in\R^3$. 
We obtain with \eqref{est23}, 
\begin{align*}
(I+A)(\tilde{\zeta}_j-\zeta_j)=0,
\end{align*}
where $I$ is the  ($3\times3$)-identity matrix,  $A$ is a ($3\times3$)-matrix with $|A_{kl}|<2U_7|t_j|$, and hence 
\begin{align*}
\det(I+A)\ge1- \{3\cdot (2U_7|t_j|)+6\cdot (2U_7 |t_j|)^2+6\cdot(2U_7 |t_j|)^3\}>0,
\end{align*}
which implies that $\tilde{\zeta}_j=\zeta_j$. 
This is a contradiction, and therefore \eqref{2contra} holds. 
Note that $\det \frac{\p x}{\p \xi}(s;\xi)>0$ for all $s\in[-\delta,t_\ast+\delta]$ and $\xi\in\interface_{\tilde{\ep}_1}(0)$, since 
$$ \mbox{$\dis\frac{\p x}{\p \xi}(s;\xi)= \frac{\p x}{\p \xi}(0;\xi)+ \frac{\p x}{\p \xi}(s;\xi)- \frac{\p x}{\p \xi}(0;\xi)=I+\Big[\frac{\rm d}{{\rm d} s}  \frac{\p x_k}{\p \xi_l}(\lambda_{kl}s;\xi)s\Big]$}$$
with some $\lambda_{kl}\in(0,1)$. 

\indent {\bf Step 3.} {\it Construction of a local in space solution within $[0,t_\ast]$.} Define
\begin{eqnarray*}
&& O_1 :=\bigcup_{-\delta< s< t_\ast+\delta}\Big( \{s\}\times \{x(s;\xi)\,|\,\xi\in\interface_{\tilde{\ep}_1}(0)\}\Big),\\
&&\psi_1: (-\delta,t_\ast+\delta)\times \interface_{\tilde{\ep}_1}(0)\to  O_1,\quad \psi_1(s,\xi):=(s, x(s;\xi)),\\
&&\psi_1(t,\cdot):  \interface_{\tilde{\ep}_1}(0)\to  O_1|_{s=t}=\{t\}\times \{x(t;\xi)\,|\,\xi\in\interface_{\tilde{\ep}_1}(0)\}\quad (-\delta< \forall\,t< t_\ast+\delta).
\end{eqnarray*}
Since $\xi\mapsto x(s;\xi)$ is invertible and $\det \frac{\p x}{\p \xi}(s;\xi)>0$, the maps $\psi_1$ and $\psi_1(t,\cdot)$
are $C^1$-diffeomorphisms. 
Furthermore, there exists $\ep_2>0$ such that
\begin{eqnarray}\label{ep2}
 \{x(t_\ast;\xi)\,|\,\xi\in\interface_{\tilde{\ep}_1}(0)\}\supset
\interface_{\ep_2}(t_\ast).
\end{eqnarray}
Let $\varphi_1:  O_1\to \interface_{\tilde{\ep}_1}(0)$ be the function defined via 
$$\psi_1(t,\xi)=(t,x)\Leftrightarrow (t,\xi)=\psi^{-1}_1(t,x)=:(t,\varphi_1(t,x)),$$
where  we note that $\varphi_1(t,\interface(t))=\interface(0)$.  
 Then, the method of characteristics (see Appendix 1 in \cite{BFS}) together with \eqref{1xsxs}--\eqref{2xsxs3} guarantees that  the function
$$\phi_1: O_1 \to\R,\quad \phi_1(t,x):=\Phi(t; \varphi_1(t,x))$$
is $C^2$-smooth and satisfies
\begin{align*}
\frac{\p \phi_1}{\p t}(t,x) +v\Big(t,x-\frac{\phi_1(t,x)\nabla\phi_1(t,x)}{|\nabla\phi_1(t,x)|^2}\Big)\cdot \nabla \phi_1(t,x)=0&\quad (\forall\,(t,x)\in  O_1),\\\medskip
\phi_1(0,x)=\phi^0(x)&\quad (\forall\,x\in  \interface_{\tilde{\ep}_1}(0)),\\\medskip
\{x\,|\,\phi_1(t,x)=0\}=\interface(t)&\quad (\forall\,t\in(-\delta,t_\ast+\delta)),\\\medskip
  \nabla \phi_1(t,x)=p(t;\varphi_1(t,x))&\quad  (\forall\,(t,x)\in  O_1),\\\medskip
  |\nabla \phi_1(t,x)|=|p(t;\varphi_1(t,x))|=|\nabla\phi^0(\varphi_1(t,x))|& \quad (\forall\,(t,x)\in  O_1).
\end{align*}
\indent {\bf Step 4.} {\it Iteration of  Step 1-3 with the common constant $t_\ast$ given in \eqref{t-star}}.   
We investigate the PDE in \eqref{S-HJ2} for $t\ge t_\ast$ with initial  condition $\phi(t_\ast,\cdot)=\phi_1(t_\ast,\cdot)$ on $\interface_{\ep_2}(t_\ast)$ by means of the same reasoning as Step 1--3.  
The essential fact is that $|\nabla \phi_1(t_\ast,\cdot)|$ on $\interface(t_\ast)$ has exactly the same upper/lower bound as $|\nabla \phi^0|$ on $\interface(0)$,  and hence the same constants $U_1, \ldots,U_7$ and $t_\ast$ are available.  
Then, there exists $\tilde{\ep}_2\in(0,\ep_2]$  and $0<\delta\ll1$  for which  we obtain:   
\begin{align*}
&O_2 :=\bigcup_{t_\ast-\delta< s< 2t_\ast+\delta} \Big(\{s\}\times \{x(s;\xi)\,|\,\xi\in\interface_{\tilde{\ep}_2}(t_\ast)\}\Big),\\
&\psi_2: (t_\ast-\delta,2t_\ast+\delta)\times \interface_{\tilde{\ep}_2}(t_\ast)\to  O_2,\quad \psi_2(s,\xi):=(s, x(s;\xi)),\\
&\mbox{there exist $\ep_3>0$ such that
$ \{x(2t_\ast;\xi)\,|\,\xi\in\interface_{\tilde{\ep}_2}(t_\ast)\}\supset \interface_{\ep_3}(2t_\ast)$},\\
&\psi_2(t,\xi)=(t,x)\Leftrightarrow (t,\xi)=\psi_2^{-1}(t,x)=:(t,\varphi_2(t,x)),\\
&\phi_2: O_2 \to\R,\quad \phi_2(t,x):=\Phi(t; \varphi_2(t,x)).
\end{align*}
The function $\phi_2$ is $C^2$-smooth and satisfies
\begin{align*}
\frac{\p \phi_2}{\p t}(t,x) +v\Big(t,x-\frac{\phi_2(t,x)\nabla\phi_2(t,x)}{|\nabla\phi_2(t,x)|^2}\Big)\cdot \nabla \phi_2(t,x)=0&\quad  (\forall\,(t,x)\in  O_2),\\
\phi_2(t_\ast,x)=\phi_1(t_\ast,x)&\quad ( \forall\,x\in  \interface_{\tilde{\ep}_2}(0)),\\
\{x\,|\,\phi_2(t,x)=0\}=\interface(t)&\quad (\forall\,t\in(t_\ast-\delta,2t_\ast+\delta)),\\
  \nabla \phi_2(t,x)=p(t;\varphi_2(t,x))&\quad  (\forall\,(t,x)\in  O_2),\\
|\nabla \phi_2(t,x)|=|\nabla\phi_1(t_\ast,\varphi_2(t,x))|
=|\nabla\phi^0(\varphi_1(t_\ast,\varphi_2(t,x)))| &\quad (\forall\,(t,x)\in  O_2).
\end{align*}
With the common constant  $t_\ast$ and (shrinking) constants $\ep_1,\tilde{\ep}_1,  \ep_2,\tilde{\ep}_2,\ep_3, \tilde{\ep}_3,  \cdots$,  this process is repeatable countable many times.  The proof is done  with $\Theta:= \cup_{l\in\N}  O_l$.
\subsection{Proof of Theorem \ref{Thm-viscosity}}
Our proof is based on the standard theory of viscosity solutions, i.e., Perron's method for existence, and the comparison principle for uniqueness and continuity: see  \cite{Ishii} and \cite{CIL}. 
  
We first recall the definition of viscosity (sub/super)solutions of a general first-order Hamilton-Jacobi equation
\begin{eqnarray}\label{HJ2}
\mathcal{H}(z,u(z),\nabla_z u(z))=0 \mbox{\quad in $O$},
\end{eqnarray}
where $O\subset\R^N$ is an open set, $\mathcal{H}=\mathcal{H}(z,u,q):O\times\R\times\R^N\to\R$ is a given continuous function and $u:O\to\R$ is the unknown function.  Evolutional  Hamilton-Jacobi equations are included in this form with $z=(t,x)$. 
For a locally bounded function $u:O\to\R$, let  $u^\ast:O\to\R$ be  the upper semicontinuous envelope   of $u$ and $u_\ast:O\to\R$ be the lower semicontinuous envelope of $u$, i.e., 
\begin{eqnarray*}
&&u^\ast(z):=\lim_{r\to0}\,\sup\{ u(y)\,|\,y\in O,\,\,\,0\le|y-z|\le r\},\\
&&u_\ast(z):=\lim_{r\to0}\,\inf\{ u(y)\,|\,y\in O,\,\,\,0\le|y-z|\le r\},
\end{eqnarray*}
where $u^\ast$ is upper semicontinuous and $u_\ast$ is lower semicontinuous;  if $u$ is upper semicontinuous (resp. lower semicontinuous),  $u=u^\ast$ (resp. $u=u_\ast$) holds. 
 \medskip

\noindent {\bf Definition.} {\it  A function $u:O\to\R$ is a viscosity subsolution (resp.\,\,supersolution) of \eqref{HJ2}, provided
\begin{itemize}
\item $u^\ast$ is bounded from above (resp.\,\,$u_\ast$ is bounded from below);
\item If $(\varphi,z)\in C^1(O;\R)\times O$ is such that 
\begin{align*}
&\mbox{$u^\ast-\varphi$ has a local maximum at the point $z$},\\
&\mbox{(resp.\,\,$u_\ast-\varphi$ has a local minimum at the point $z$)}
\end{align*}
 it holds that 
 $$\mathcal{H}(z,u^\ast(z),\nabla_y \varphi(z))\le0
 \quad(\mbox{resp.\,\,} \mathcal{H}(z,u_\ast(z),\nabla_y \varphi(z))\ge0)
 .$$
\end{itemize}
A function $u:O\to\R$ is a viscosity solution of \eqref{HJ2}, if it is both a  viscosity subsolution and a viscosity supersolution of  \eqref{HJ2}.
}
\medskip\medskip

In Perron's method, construction of an appropriate viscosity subsolution and supersolution of \eqref{S-HJm}, whose  zero level sets coincide with $\{\interface(t)\}_{t\ge0}$, is essential; the linear transport equation with an external force provides them. 
First, we state a lemma. 
\begin{Lemma}\label{v-tra}
Suppose that $v$ satisfy  (H1) and (H2). Let $\phi^0:\bar{\Omega}\to\R$ be continuous.  
Then, $f(t,x):=\phi^0(X(0,t,x))$ satisfies $\frac{\p f}{\p t}(t,x)+v(t,x)\cdot \nabla f(t,x)=0$ on $(0,T)\times\Omega$ in the sense of viscosity solutions. 
\end{Lemma}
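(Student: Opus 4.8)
\textbf{Proof plan for Lemma~\ref{v-tra}.}

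The plan is to verify directly that the explicit function $f(t,x)=\phi^0(X(0,t,x))$ is simultaneously a viscosity subsolution and a viscosity supersolution of the linear transport equation on $(0,T)\times\Omega$. Since $\phi^0$ is merely continuous, $f$ need not be differentiable, so I cannot differentiate the solution formula directly; the argument must go through test functions. The central structural fact I would use is that $t\mapsto X(s,t,x)$, the backward flow, is itself governed by the ODE \eqref{ODE} in a suitable sense, and more precisely that along any trajectory the quantity $f$ is constant: if $x(\cdot)$ solves $x'(s)=v(s,x(s))$ on an interval with $x(t_0)=x_0\in\Omega$ (which stays in $\Omega$ by (H2)), then $X(0,s,x(s))$ is independent of $s$, hence $f(s,x(s))=\phi^0(X(0,t_0,x_0))$ is constant in $s$. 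This is the invariance of $f$ along characteristics, and it is what will let a test-function inequality propagate.

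The key steps, in order, are as follows. First, fix $(t_0,x_0)\in(0,T)\times\Omega$ and $\varphi\in C^1$ such that $f-\varphi$ (note $f$ is continuous, so $f=f^\ast=f_\ast$) has a local maximum at $(t_0,x_0)$; I must show $\varphi_t(t_0,x_0)+v(t_0,x_0)\cdot\nabla_x\varphi(t_0,x_0)\le 0$. Second, solve the characteristic ODE $x'(s)=v(s,x(s))$, $x(t_0)=x_0$; by (H1) the solution exists locally in $s$ around $t_0$ and by (H2) it remains in $\bar\Omega$, and for $x_0\in\Omega$ it stays in $\Omega$ for $|s-t_0|$ small. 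Third, use invariance: $f(s,x(s))=f(t_0,x_0)$ for $s$ near $t_0$. Fourth, the local-max property gives $f(s,x(s))-\varphi(s,x(s))\le f(t_0,x_0)-\varphi(t_0,x_0)$ for $s$ near $t_0$, so combining with invariance, $g(s):=\varphi(s,x(s))$ has a local minimum at $s=t_0$. Fifth, since $\varphi\in C^1$ and $x(\cdot)$ is $C^1$, $g$ is differentiable at $t_0$ and $g'(t_0)=0$, i.e. $\varphi_t(t_0,x_0)+\nabla_x\varphi(t_0,x_0)\cdot x'(t_0)=\varphi_t(t_0,x_0)+v(t_0,x_0)\cdot\nabla_x\varphi(t_0,x_0)=0$, which is in particular $\le 0$. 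The supersolution case is the mirror image: a local minimum of $f-\varphi$ forces a local maximum of $g$ at $t_0$, again giving $g'(t_0)=0\ge 0$. Thus $f$ is a viscosity solution.

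I would state separately, as the one substantive ingredient, the invariance identity $X(0,s,X(s,t_0,x_0))=X(0,t_0,x_0)$, which is the flow (semigroup/cocycle) property $X(0,s,\cdot)\circ X(s,t_0,\cdot)=X(0,t_0,\cdot)$; this follows from uniqueness of solutions to \eqref{ODE} under the Lipschitz hypothesis in (H1), and I would either cite the corresponding statement (e.g.\ the method-of-characteristics material in \cite{BFS}) or give the one-line uniqueness argument. The boundary/flow-invariance subtlety — that the characteristic through an interior point $x_0$ does not instantaneously leave $\Omega$ — is handled by (H2) together with the fact that $\Omega$ is open, so it is harmless for a \emph{local}-in-time test-function argument; I only need $x(s)\in\Omega$ for $s$ in a small neighborhood of $t_0$.

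The main obstacle, such as it is, is essentially bookkeeping rather than depth: one must be careful that the test-function inequality is only local, so the characteristic must be run only for $|s-t_0|$ small, and one must make sure the chain rule for $g(s)=\varphi(s,x(s))$ is legitimately applied at the single point $t_0$ (it is, since $\varphi\in C^1$ and $x\in C^1$ near $t_0$). There is no PDE-analytic difficulty here; the only genuine input is the invariance of $f$ along characteristics, which is a direct consequence of uniqueness for \eqref{ODE}. I would therefore present the lemma as a short, self-contained verification, emphasizing the invariance identity as the crux.
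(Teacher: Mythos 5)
Your proof is correct and uses the standard characteristic-invariance argument: since $f$ is constant along the flow $x(s)=X(s,t_0,x_0)$ by the cocycle property $X(0,s,X(s,t_0,x_0))=X(0,t_0,x_0)$, the test-function inequality forces $g(s):=\varphi(s,x(s))$ to have an interior critical point at $t_0$, giving the viscosity inequality with equality. The paper simply cites the proof of Lemma 3.2 in \cite{BFS} here, and that proof (consistent with the parallel computation in Case~1 of the proof of Theorem~\ref{Thm-viscosity}) proceeds by the same mechanism, so your route is essentially the paper's.
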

\begin{proof}
See the proof of Lemma 3.2 in \cite{BFS}.
\end{proof}
Now we recall that our velocity feild $v$ is extended over $[0,\infty)\times\R^3$ being continuous in $(t,x)$ and Lipschitz continuous in $x$.
 Let $V_0>0$ be a constant such that 
\begin{align}\label{VVV}
|v(t,x)-v(t,y)|\le V_0|x-y|\quad (\forall\,t\ge0, \,\,\forall\,x,y\in\R^3).
\end{align}
Then it holds that 
\begin{align}\label{4esjo}
-V_0 |u|\le v\Big(t,x-\frac{\theta^2up}{\theta^2|p|^2+\eta(\theta^2|p|^2)}\Big)\cdot p -  v(t,x)\cdot p\le V_0 |u|\quad (\forall\,(t,x,p,u),\,\,\,\forall\,\theta\ge0).
\end{align}
In fact, we have  
\begin{align*}
&\Big|v\Big(t,x-\frac{\theta^2up}{\theta^2|p|^2+\eta(\theta^2|p|^2)}\Big)\cdot p -  v(t,x)\cdot p\Big|
=V_0|u| \frac{\theta^2 |p|^2}{\theta^2|p|^2+\eta(\theta^2|p|^2)}  \le V_0|u|.  
\end{align*}
In this subsection, we use \eqref{4esjo} with $\theta=1$.  Define the function $S:[0,\infty)\times\bar{\Omega}\to\R$ as 
\begin{align*}
&S(t,x):=\left\{
\begin{array}{cll}
&-V_0&\mbox{for $ x\in\overline{\Omega^+(t)}$, $t\in[0,\infty)$},\\
&+V_0&\mbox{for $ x\in\Omega^-(t)$, $t\in[0,\infty)$}.
\end{array}
\right.
\end{align*}
Introduce  the functions $\rho,\tilde{\rho}:[0,T)\times\bar{\Omega}\to\R$ as
\begin{align*}
&\rho(t,x):=\phi^0(X(0,t,x))\exp\Big(\int_0^t S(s,X(s,t,x)){\rm d}s\Big)\quad (\mbox{$X$ is the flow of \eqref{ODE}}),\\
&\tilde{\rho}(t,x):=\phi^0(X(0,t,x))\exp\Big(\int_0^t -S(s,X(s,t,x)){\rm d}s\Big), 
\end{align*}
which are  continuous (hence, $\rho^\ast=\rho=\rho_\ast$, $\tilde{\rho}^\ast=\tilde{\rho}=\tilde{\rho}_\ast$).  
We show that $\rho$ (resp.\,$\tilde{\rho}$) is a viscosity subsolution (resp.\,supersolution) of \eqref{S-HJm}. 
We deal with $\tilde{\rho}$ to be a visocosity supersolution.  
Fix any $(t,x)\in (0,\infty)\times\Omega$.  
Let $\varphi$ be any test function satisfying the condition of the test for viscosity supersolutions at $(t,x)$:
\begin{align}\label{321cccc}  
\tilde{\rho}(s,y)-\varphi(s,y)
\ge \tilde{\rho}(t,x)-\varphi(t,x)
\mbox{\quad ($\forall\, (s,y)$ near $(t,x)$),}
\end{align}
\indent {\bf Case 1:} $(t,x)\in \cup_{0< s<\infty}(\{s\}\times\interface(s))$. 
Then, $\tilde{\rho}(s,X(s,t,x))=\tilde{\rho}(t,x)= 0$ for all $0\le s\le t$.
Hence,  taking $(s,y)=(s,X(s,t,x))$ in \eqref{321cccc} and sending $s\to t-$, we find  
 \begin{eqnarray*}
 {\rm D}\varphi(t,x)\cdot (1,v(t,x))\ge0\quad ({\rm D}:=(\p_s,\nabla_y)), 
 \end{eqnarray*}
which yields 
$$ \frac{\p \varphi}{\p s}(t,x)+v(t,x)\cdot\nabla_y \varphi(t,x)\ge 0.$$
Therefore, noting that $\tilde{\rho}(t,x)=0$, we obtain the desired inequality: 
$$\frac{\p \varphi}{\p s}(t,x)+v\Big(t,x-\frac{\tilde{\rho}(t,x)\nabla_y \varphi(t,x)}{|\nabla_y \varphi(t,x)|^2+\eta(|\nabla_y \varphi(t,x)|^2)}\Big)\cdot\nabla_y \varphi (t,x)\ge 0.$$
\indent {\bf Case 2:} $(t,x)\not\in \cup_{0< s<T}(\{s\}\times\interface(s))$. 
Setting $r:=\tilde{\rho}(t,x)-\varphi(t,x)$ in \eqref{321cccc},  we have 
\begin{align}\label{31case11}  
\tilde{\rho}(s,y)-(\varphi(s,y)+r)
\ge 0= \tilde{\rho}(t,x)-(\varphi(t,x)+r) \mbox{\quad ($\forall\, (s,y)$ near $(t,x)$).}
\end{align}
Since $\exp(\int_0^s S(s',X(s',s,y)){\rm d}s')>0$, multiplying \eqref{31case11} by  $\exp(\int_0^s S(s',X(s',s,y)){\rm d}s')$  yields  
\begin{align*}  
&\phi^0(X(0,s,y))-(\varphi(s,y)+r)\exp\Big(\int_0^s S(s',X(s',s,y)){\rm d}s'\Big)
\ge 0,
\end{align*}
while 
\begin{align*}
0&= \tilde{\rho}(t,x)-(\varphi(t,x)+r)\\
&= \tilde{\rho}(t,x)\exp\Big(\int_0^t S(s',X(s',t,x)){\rm d}s'\Big)-(\varphi(t,x)+r)\exp\Big(\int_0^t S(s',X(s',t,x)){\rm d}s'\Big)\\
&= \phi^0(X(0,t,x))-(\varphi(t,x)+r)\exp\Big(\int_0^t S(s',X(s',t,x)){\rm d}s'\Big).
\end{align*} 
Hence, setting $\psi(s,y):=(\varphi(s,y)+r)\exp(\int_0^s S(s',X(s',s,y)){\rm d}s')$,  we obtain 
\begin{align*}  
&\phi^0(X(0,s,y))-\psi(s,y)
\ge  \phi^0(X(0,t,x))-\psi (t,x) \mbox{\quad ($\forall\, (s,y)$ near $(t,x)$).}
\end{align*}
Since $\psi(s,y)$ is $C^1$-smooth near $(t,x)$, $\psi$ serves as a test function for $f(s,y):=\phi^0(X(0,s,y))$ at $(t,x)$; since Lemma \ref{v-tra} confirms that $f$ is a viscosity solution of $\frac{\p f}{\p t}+v\cdot \nabla f=0$ on $(0,T)\times\Omega$, we  obtain 
\begin{align}\label{33vis3333}
\frac{\p \psi}{\p s}(t,x)+v(t,x)\cdot\nabla_y\psi(t,x)\ge 0.
\end{align}
Set  $g(s,y):=\int_0^s S(s',X(s',s,y)){\rm d}s'$. Then, $\psi=(\varphi+r) e^g$ and  the left-hand side of \eqref{33vis3333} is   
\begin{align*}
\Big(\frac{\p \varphi}{\p s}(t,x)+v(t,x)\cdot\nabla_y\varphi(t,x)\Big) e^{g(t,x)}+\Big( \frac{\p g}{\p s}(t,x)+v(t,x)\cdot \nabla_y g(t,x) \Big)(\varphi(t,x)+r)e^{g(t,x)}.
\end{align*}
 A direct calculation yields $\frac{\p g}{\p s}(t,x)+v(t,x)\cdot \nabla_y g(t,x)=S(t,x)$. 
Therefore, we obtain 
$$\frac{\p \varphi}{\p s}(t,x)+v(t,x)\cdot\nabla_y\varphi(t,x)\ge -(\varphi(t,x)+r)S(t,x)=-\tilde{\rho}(t,x)S(t,x),$$
which is equivalent to 
\begin{align*}
&\frac{\p \varphi}{\p s}(t,x)
+v\Big(t,x-\frac{\tilde{\rho}(t,x)\nabla_y \varphi(t,x)}{|\nabla_y \varphi(t,x)|^2+\eta(|\nabla_y \varphi(t,x)|^2)}\Big)\cdot\nabla_y \varphi (t,x)\\
&\ge 
v\Big(t,x-\frac{\tilde{\rho}(t,x)\nabla_y \varphi(t,x)}{|\nabla_y \varphi(t,x)|^2+\eta(|\nabla_y \varphi(t,x)|^2)}\Big)\cdot\nabla_y \varphi (t,x)
-v(t,x)\cdot\nabla_y\varphi(t,x)\\
&\qquad -\tilde{\rho}(t,x)S(t,x)\\
&=:q-\tilde{\rho}(t,x)S(t,x).
\end{align*}
If $x\in \Omega^+(t)$,  we have $-\tilde{\rho}(t,x)<0$ and the  inequality  \eqref{4esjo} provides  $q\ge -\tilde{\rho}(t,x)V_0=\tilde{\rho}(t,x)S(t,x)$; if $x\in \Omega^-(t)$, we have $-\tilde{\rho}(t,x)>0$ and  $q\ge -\tilde{\rho}(t,x)(-V_0)=\tilde{\rho}(t,x)S(t,x)$ due to \eqref{4esjo}.   
Therefore, we obtain the desired inequality: 
\begin{align*}
\frac{\p \varphi}{\p s}(t,x)
+v\Big(t,x-\frac{\tilde{\rho}(t,x)\nabla_y \varphi(t,x)}{|\nabla_y \varphi(t,x)|^2+\eta(|\nabla_y \varphi(t,x)|^2)}\Big)\cdot\nabla_y \varphi (t,x)\ge 0,
\end{align*}
which concludes that $\tilde{\rho}$ is a viscosity supersolution. A similar reasoning shows that $\rho$ is a viscosity subsolution. 

 We apply Perron's method with the viscosity subsolution $\rho$ and supersolution $\tilde{\rho}$.  
It holds that 
\begin{align*}
&\rho ,\tilde{\rho}\in C^0([0,T)\times\bar{\Omega}),\quad
\rho\le\tilde{\rho}\mbox{ in $[0,\infty)\times\bar{\Omega}$},\quad \rho=\tilde{\rho}=0\mbox{ on $[0,\infty)\times \p\Omega$},\\
&\rho(0,\cdot)=\tilde{\rho}(0,\cdot)=\phi^0 \mbox{ on }\bar{\Omega},\quad  \rho=\tilde{\rho}=0\mbox{ on }\bigcup_{0\le s<\infty}\Big(\{s\}\times\interface(s)\Big).
\end{align*}
It follows from Perron's method (Theorem 3.1 in \cite{Ishii}) that there exists a viscosity solution $\phi:(0,\infty)\times\Omega\to\R$ of the PDE in \eqref{S-HJm}  such that
\begin{align}\label{ppee}
\rho\le \phi\le \tilde{\rho} \mbox{ in $(0,\infty)\times\Omega$}.
\end{align}
At this point, the continuity of $\phi$ in $(0,\infty)\times\Omega$ is not clear, but \eqref{ppee} implies that $\phi$ can be continuously extended at least to the boundary $([0,\infty)\times\p\Omega)\cup(\{0\}\times\bar{\Omega})$ as  \begin{align}\label{boundary}
\phi=0\mbox{ on $[0,\infty)\times\p\Omega$},\quad \phi(0,\cdot)=\phi^0\mbox{ on $\{0\}\times\bar{\Omega}$}, 
\end{align}
where we note that  \eqref{boundary} is the desired initial/boundary condition. 
Furthermore, $\phi$ satisfies \eqref{visin}.

We apply  the comparison principle (Theorem 8.2 in \cite{CIL}) to confirm the continuity of $\phi$ on $[0,\infty)\times\bar{\Omega}$ and the uniqueness of a viscosity solution.   
For this purpose, the monotonicity condition must be verified through the change of variable $w=e^{-V_0t}\phi$, i.e.,   the problem \eqref{S-HJm} is changed into  
\begin{align}\label{HJmono}
&\frac{\p w}{\p t}(t,x)+v\Big(t,x-\frac{e^{2V_0t}w(t,x)\nabla w(t,x)}{e^{2V_0t}|\nabla w(t,x)|^2+\eta(e^{2V_0t}|\nabla w(t,x)|^2)}\Big)\cdot \nabla w(t,x)  +V_0 w(t,x)=0,
\end{align}
where the function 
\begin{align*}
&u\mapsto G(t,x,p,u):=v\Big(t,x-\frac{e^{2V_0t}up}{e^{2V_0t}|p|^2+\eta(e^{2V_0t}|p|^2)}\Big)\cdot p+V_0u   
\end{align*}
 is nondecreasing for each $(t,x,p)$. In fact, due to \eqref{VVV} and $\eta(\cdot)\ge0$,  we have for every $\ep\ge0$,  
\begin{align*}
&G(t,x,p,u+\ep)-G(t,x,p,u)\\
&\quad = V_0\ep+
v\Big(t,x-\frac{e^{2V_0t}(u+\ep)p}{e^{2V_0t}|p|^2+\eta(e^{2V_0t}|p|^2)}\Big)\cdot p-v\Big(t,x-\frac{e^{2V_0t}up}{e^{2V_0t}|p|^2+\eta(e^{2V_0t}|p|^2)}\Big)\cdot p\\
&\quad \ge  V_0\ep- V_0\ep\frac{e^{2V_0t} |p|^2}{e^{2V_0t}|p|^2+\eta(e^{2V_0t}|p|^2)}\ge 0.
\end{align*}
The new function $w$ is still a viscosity solution of \eqref{HJmono} (see Subsection 3.2 of \cite{BFS} for a  related explanation/calculation) satisfying the initial/boundary condition $w(0,\cdot)=\phi^0$, $w|_{\p\Omega}=0$ together with the equality $w_\ast=w^\ast=w$ on $([0,\infty)\times\p\Omega)\cup (\{0\}\times\bar{\Omega})$.   Hence, it is enough to show the continuity and uniqueness of $w$. 
By definition,  $w_\ast\le w^\ast$ holds. On the other hand, $w$ being both a viscosity subsolution and viscosity supersolution implies that $w^\ast$ is an upper semicontinuous viscosity subsolution and $w_\ast$ is a lower semicontinuous viscosity supersolution; the comparison principle implies that $w^\ast\le w_\ast$ on $[0,\infty)\times\Omega$.
 Therefore, we  conclude that $w$ is continuous on $[0,\infty)\times\bar{\Omega}$ satisfying the initial/boundary condition in the classical sense. Furthermore,  such a viscosity solution is unique. In fact, if $\tilde{w}$ is another (continuous) viscosity solution, the comparison principle implies $w\le\tilde{w}$ by regarding $w$ as a viscosity subsolution and $\tilde{w}$ as a viscosity supersolution; $w\ge\tilde{w}$ by regarding $w$ as a viscosity supersolution and $\tilde{w}$ as a viscosity subsolution.
 
We conclude this subsection with the  remark that the above reasoning works also for \eqref{S-HJm} with initial data $\phi^0$ that is given independently from the two-phase flow with an moving  interface, where one still needs to look at the sign of $\phi^0(X(0,t,x))$ in construction of  suitable viscosity supersolution and viscosity subersolution.

\subsection{Proof of Theorem \ref{Thm-comparison}}
Let $V_0$ be the constant in \eqref{VVV} and $T>0$ be an arbitrary fixed number.  
The PDE \eqref{HJmono} is denoted by 
\begin{align}\label{HJcom}
\frac{\p w}{\p t}(t,x)+G(t,x,\nabla w(t,x),w(t,x))=0,
\end{align}
where we recall that 
$$   u\mapsto G(t,x,p,u):=v\Big(t,x-\frac{e^{2V_0t}up}{e^{2V_0t}|p|^2+\eta(e^{2V_0t}|p|^2)}\Big)\cdot p+V_0u $$
is nondecreasing for each $(t,x,p)$.    
We state two estimates in regards to $G$, which play an essential role in the upcoming argument. 
We write  
\begin{align*}
&G(t,x,p+q,u)-G(t,x,p,u)=\uwave{v\Big(t,x-\frac{e^{2V_0t}u(p+q)}{e^{2V_0t}|p+q|^2+\eta(e^{2V_0t}|p+q|^2)}   \Big)\cdot q}_{\rm (i)}\\
&\qquad +\uwave{\Big\{v\Big(t,x-\frac{e^{2V_0t}u(p+q)}{e^{2V_0t}|p+q|^2+\eta(e^{2V_0t}|p+q|^2)}   \Big)
-v\Big(t,x-\frac{e^{2V_0t}up}{e^{2V_0t}|p|^2+\eta(e^{2V_0t}|p|^2)}   \Big)\Big\}\cdot p}_{\rm (ii)}\!\!\!\!.
\end{align*}
 There exist  some constants $c_1=c_1(T)$ and $c_2=c_2(T)>0$ for which we have for all $(t,x,p,q,u)\in [0,T]\times\R^3\times\R^3\times\R^3\times\R$, 
\begin{align}\label{G3} 
|{\rm (i)}-v(t,x)\cdot q|&\le c_1(T)|u||q|, \\\label{G4}
|{\rm (ii)}|&\le  c_2(T)|u||p||q|.
\end{align}
In fact, we find some constant $c_3(T)>0$ such that  for each $r\ge0$ and  $t\in[0,T]$,  
\begin{align*}
&\frac{e^{2V_0t}r}{e^{2V_0t}r^2+\eta(e^{2V_0t}r^2)}= \frac{r}{r^2+e^{-2V_0t}\eta(e^{2V_0t}r^2)}\le c_3(T),\\
&|{\rm (i)}-v(t,x)\cdot q|
\le   V_0 \frac{e^{2V_0t}|u||p+q|}{e^{2V_0t}|p+q|^2+\eta(e^{2V_0t}|p+q|^2)}|q|
\le V_0c_3(T)|u||q|,
\end{align*}
which gives \eqref{G3}. 
%
As for \eqref{G4}, setting $R(t,p):=\frac{e^{2V_0t}p}{e^{2V_0t}|p|^2+\eta(e^{2V_0t}|p|^2)}$, we have 
\begin{align*}
|{\rm (ii)}|&=\Big| \Big\{v\Big(t,x-uR(t,p+q)\Big)-v\Big(t,x-uR(t,p)\Big)\Big\} \cdot p \Big|\\
&\le V_0|u||p| |R(t,p+q)-R(t,p)|. 
\end{align*}
Since $R$ is $C^1$-smooth, we see that there exist some $\theta_{i}\in(0,1)$ such that 
\begin{align*}
|R(t,p+q)-R(t,p)|=\sqrt{\sum_{i=1}^3\Big\{\frac{\p R_i}{\p p}(t,p+\theta_i q)\cdot q \Big\}^2}
\le |q|\sqrt{\sum_{i=1}^3\Big|\frac{\p R_i}{\p p}(t,p+\theta_i q)\Big|^2}.
\end{align*}
There exists some constant $c_4(T)>0$ such that for all $(t,p)\in[0,T]\times\R^3$  
\begin{align*}
\Big|\frac{\p R_i}{\p p_j}(t,p)\Big|
&= \Big|\frac{e^{2V_0t}\delta_{ij}}{e^{2V_0t}|p|^2+\eta(e^{2V_0t}|p|^2)} 
- \frac{2\{1+\eta'(e^{2V_0t}|p|^2)\}e^{2V_0t}e^{2V_0t}p_ip_j}{\{e^{2V_0t}|p|^2+\eta(e^{2V_0t}|p|^2)\}^2}\Big|\\
&\le c_4(T) +2(1+\max |\eta'|)   \Big\{\frac{e^{2V_0t}|p|}{e^{2V_0t}|p|^2+\eta(e^{2V_0t}|p|^2)}\Big\}^2  \\
&\le c_4(T) + 2(1+\max |\eta'|)c_3(T)^2,
\end{align*}
which leads to \eqref{G4}. 

For the smooth solution $\phi:\Theta\to\R$ of Theorem \ref{main-thm2}, the function $w(t,x):=e^{-V_0 t}\phi$ satisfies \eqref{HJcom} in the classical sense, where we recall \eqref{eta2}.  
Let $\tilde{\phi}:[0,\infty)\times\bar{\Omega}\to\R$ denote the viscosity solution in Theorem \ref{Thm-viscosity}. 
The function $\tilde{w}(t,x):=e^{-V_0 t}\tilde{\phi}$ satisfies \eqref{HJcom} in the sense of viscosity solutions.

We (partly) describe $\Theta_T:=\Theta|_{0\le t \le T}$  as a family of smooth cylinders:
\begin{itemize}
\item With $\beta(T):=\sup_{\Theta_T}|\nabla w(t,x)|=\sup_{\Theta_T}|e^{-V_0t}\nabla \phi(t,x) |$, take  a constant  $\alpha>0$  such that
\begin{align}\label{3alpha}
\alpha -2V_0 -c_1(T)\beta(T)
-c_2(T)\beta(T)^2>0 ;
\end{align}
\item Consider the function
$$\bar{u}(t,x):= e^{\alpha t}w(t,x) ,$$
where $\bar{u}$ solves the following equation in the classical sense: 
\begin{align}\label{3pen}
 \frac{\p u}{\p t}(t,x)+e^{\alpha t}G(t,x,e^{-\alpha t}\nabla u(t,x),e^{-\alpha t}u(t,x))=\alpha u(t,x)
 \quad u(0,\cdot)=\phi^0;
\end{align}
\item With some constant $\bar{m}>0$, define
\begin{align*}
&A_m:=\{ (t,x)\in[0,T]\times\Omega \,|\,\bar{u}(t,x)=m\} \mbox{\quad for each $ -\bar{m}\le m\le \bar{m}$},  \\
&\Gamma_T:= \bigcup_{-\bar{m}\le m\le \bar{m}} A_m.
\end{align*}
\end{itemize}
Since $\nabla \bar{u}\neq0$ near $\cup_{0\le t\le T} (\{t\}\times\interface(t))$, we may choose  $\bar{m}>0$ (possibly very small) so that  each $A_m$ is a smooth surface,  $\Gamma_T$ is contained in $\Theta_T$ and $\p\Gamma_T\setminus \Gamma_T|_{t=0,T}=(A_{\bar{m}}\cup A_{-\bar{m}})|_{0<t<T}$, while $\Gamma_T$ contains the $[0,T]$-part of  a tubular neighborhood of  $\{\interface(t)\}_{t\ge0}$. Note that such an $\bar{m}$ depends, in general, on $T$.

We will prove that $\phi\equiv \tilde{\phi}$ on $\Gamma_T$.    Suppose that this is not the case. 
Then, one of the following inequality holds: 
\begin{align*}
\max_{\Gamma_T}(\tilde{\phi}-\phi)>0,\quad \max_{\Gamma_T}(\phi-\tilde{\phi})>0.
\end{align*}
We deal with the first case (the other case is parallel). 
Then, there exists an interior point  $(t^\ast,x^\ast)$ of $\Gamma_T$ such that
$$\sigma:=\tilde{w}(t^\ast,x^\ast)-w(t^\ast,x^\ast)>0.$$
Since the  zero level set  of $w$ and that of $\tilde{w}$ are identical, it holds that 
$$(t^\ast,x^\ast)\not\in A_0=\bigcup_{0\le t\le T}(\{t\}\times\interface(t)).$$
Consider the numbers $m^\ast\in(-\bar{m},\bar{m})$, $\delta>0$ and $M>0$ such that 
\begin{align*}
&(t^\ast,x^\ast)\in A_{m^\ast},\,\,\,\mbox{or equivalently, }\bar{u}(t^\ast,x^\ast)=m^\ast,\\
&(m^\ast)^2+2\delta<(\bar{m})^2,\quad
M\ge\max_{(t,x)\in \Gamma_T} |w(t,x)-\tilde{w}(t,x)|.
\end{align*}
Take a monotone increasing $C^1$-function $h: \R\to [0,3M]$ such that
\begin{align*}
h(r)
=\left\{
\begin{array}{cll}
 &3M, &\mbox{ if  $(m^\ast)^2 + 2\delta \le r$},\\
 &0,&\mbox{ if $ r\le (m^\ast)^2+\delta$},\\
 &\mbox{monotone transition between $0$ and $3M$},& \mbox{ otherwise}.
\end{array}
\right.
\end{align*}
For each $\lambda>0$, consider the function $F_\lambda:\Gamma_T\to\R$ defined as
$$F_\lambda(t,x):= \tilde{w}(t,x) -w(t,x)-\lambda t -h(\bar{u}(t,x)^2).$$
Let  $(t_0,x_0)=(t_0(\lambda),x_0(\lambda))\in \Gamma_T$ be the maximum point of $F_\lambda$, i.e.,  
\begin{align*}
 F_{\lambda}(t_0,x_0)=\max_{ \Gamma_T}F_{\lambda}.
\end{align*}
\indent We show that {\it  there exists a  sufficiently small $\lambda>0$ for which $(t_0,x_0)$ is away from the ``lateral surface'' of $\Gamma_T$, i.e., $(t_0,x_0)\not\in A_{\pm \bar{m}}$, or equivalently, $|\bar{u}(t_0,x_0)|\neq \bar{m}$, and $t_0\neq0$.} 
Since $F_{\lambda}(t_0,x_0)\ge F_{\lambda}(t^\ast,x^\ast)$ and $h(\bar{u}(t^\ast,x^\ast)^2)=h((m^\ast)^2)=0$ by definition, we may fix $\lambda>0$ small enough  so that  
\begin{align}\label{831}
&\tilde{w}(t_0,x_0)-w(t_0,x_0)\ge F_{\lambda}(t_0,x_0)
\ge F_{\lambda}(t^\ast,x^\ast)\\\nonumber 
&=\tilde{w}(t^\ast,x^\ast)-w(t^\ast,x^\ast)-\lambda t^\ast-h(\bar{u}(t^\ast,x^\ast)^2)
=\sigma  -\lambda t^\ast
\ge \frac{\sigma}{2}>0.
\end{align}
Suppose that $(t_0,x_0)$ is on the ``lateral surface'' of $\Gamma_T$, i.e., $(t_0,x_0)\in A_{\pm \bar{m}}$, or equivalently, $|\bar{u}(t_0,x_0)|=\bar{m}$.
Then, since $h(\bar{m})=3M$, we have 
\begin{align*}
F_{\lambda}(t_0,x_0)
&=\tilde{w}(t_0,x_0)-w(t_0,x_0)-\lambda t_0
- h(\bar{m}^2)
\le M-h(\bar{m}^2)=-2M <0,
\end{align*}
which contradicts to \eqref{831}. 
Since $w\equiv\tilde{w}$ on $\Gamma_T|_{t=0}$, \eqref{831} implies that $t_0\neq0$.

Fix  $\lambda>0$ as mentioned above.  {\it We carry out the test at $(t_0,x_0)$ for $\tilde{w}$  being a viscosity subsolution with the test function $\psi$ given as} 
$$\psi(t,x):=w(t,x)+\lambda t+h(\bar{u}(t,x)^2) \mbox{\quad ($w$, $\bar{u}$ and $h$ are $C^1$-smooth)}.$$  
This makes sense, because the point $(t_0,x_0)$ is either an interior point of $\Gamma_T$, or $t_0=T$ and $(T,x_0)\not\in A_{\pm \bar{m}}$ (see the lemma in Section 10.2 of \cite{Evans-book} for a remark on the case $t_0=T$). 
The above $\psi$ confirms  that  $\tilde{w}(t,x)-\psi(t,x)$ takes a maximum at $(t_0,x_0)$. 
Hence, the test for $\tilde{w}$ being a viscosity subsolutions yields  
$$\frac{\p \psi}{\p t}(t_0,x_0)+G(t_0,x_0,\nabla \psi(t_0,x_0),\tilde{w}(t_0,x_0))\le 0.$$
Since $G(t,x,p,u)$ is nondecreasing with respect to $u$, the inequality \eqref{831} implies that 
\begin{align}\label{3super2}
\frac{\p \psi}{\p t}(t_0,x_0)+G(t_0,x_0,\nabla \psi(t_0,x_0),w(t_0,x_0))\le 0.
\end{align}
On the other hand, since $w$ is a classical solution, we have 
\begin{align}\label{3sub}
\frac{\p w}{\p t}(t_0,x_0)+G(t_0,x_0,\nabla w(t_0,x_0),w(t_0,x_0))= 0.
\end{align}
By  \eqref{3super2} and \eqref{3sub}, we obtain
\begin{align}\label{33BS}
\lambda
&\le  -2h'(\bar{u}(t_0,x_0)^2)\bar{u}(t_0,x_0)\frac{\p\bar{u}}{\p t}(t_0,x_0)
+G\Big(t_0,x_0,\nabla w(t_0,x_0),w(t_0,x_0)\Big)\\\nonumber
&\quad -G\Big(t_0,x_0,  \nabla w(t_0,x_0)+   2h'(\bar{u}(t_0,x_0)^2)\bar{u}(t_0,x_0)\nabla\bar{u}(t_0,x_0),w(t_0,x_0)\Big).
\end{align}
Since $\bar{u}(t,x)=e^{\alpha t}w(t,x)$ solves \eqref{3pen}, we have  
\begin{align*}
&\frac{\p\bar{u}}{\p t}(t_0,x_0)
=\alpha \bar{u}(t_0,x_0)-V_0\bar{u}(t_0,x_0)\\
&\quad - v\Big(t_0,x_0-\frac{e^{2V_0t_0}e^{-2\alpha t_0}\bar{u}(t_0,x_0)\nabla \bar{u}(t_0,x_0)}{e^{2V_0t_0}e^{-2\alpha t_0}|\nabla \bar{u}(t_0,x_0)|^2+\eta(e^{2V_0t_0}e^{-2\alpha t_0}|\nabla \bar{u}(t_0,x_0)|^2)}  \Big)\cdot \nabla \bar{u}(t_0,x_0)\\
&= \alpha \bar{u}(t_0,x_0)-V_0\bar{u}(t_0,x_0)\\
&\quad -v\Big(t_0,x_0-\frac{e^{2V_0t_0}w(t_0,x_0)\nabla w(t_0,x_0)}{e^{2V_0t_0}|\nabla w(t_0,x_0)|^2+\eta(e^{2V_0t_0}|\nabla w(t_0,x_0)|^2)}  \Big)\cdot \nabla \bar{u}(t_0,x_0)\\
&=  \alpha \bar{u}(t_0,x_0)-V_0\bar{u}(t_0,x_0)-v(t_0,x_0)\cdot\nabla \bar{u}(t_0,x_0)\\
&\quad +\uwave{\Big\{ v(t_0,x_0)  -v\Big(t_0,x_0-\frac{\theta^2w(t_0,x_0)\nabla w(t_0,x_0)}{\theta^2|\nabla w(t_0,x_0)|^2+\eta(\theta^2|\nabla w(t_0,x_0)|^2)}  \Big)  \Big\}\cdot  e^{\alpha t_0}\nabla w(t_0,x_0) }_{r_1}
\end{align*}
and by \eqref{4esjo} with $\theta=e^{V_0t_0}$,  
\begin{align*}
|r_1|\le V_0|w(t_0,x_0)|e^{\alpha t_0}  =V_0|\bar{u}(t_0,x_0)|.
\end{align*}
Hence, by $h'(\cdot)\ge0$, we obtain 
\begin{align*}
&-2h'(\bar{u}(t_0,x_0)^2)\bar{u}(t_0,x_0)\frac{\p\bar{u}}{\p t}(t_0,x_0)
\le -2h'(\bar{u}(t_0,x_0)^2)|\bar{u}(t_0,x_0)|^2(\alpha-2V_0) \\
&\quad + 2h'(\bar{u}(t_0,x_0)^2)\bar{u}(t_0,x_0)v(t_0,x_0)\cdot\nabla \bar{u}(t_0,x_0).
\end{align*}
Setting  $q:= 2h'(\bar{u}(t_0,x_0)^2)\bar{u}(t_0,x_0)\nabla\bar{u}(t_0,x_0)$, we write  
\begin{align*}
&G\Big(t_0,x_0,\nabla w(t_0,x_0),w(t_0,x_0)\Big) -G\Big(t_0,x_0,  \nabla w(t_0,x_0)+  q,w(t_0,x_0)\Big)={\rm(1)}+{\rm(2)},\\
&{\rm(1)}:= -v\Big(t_0,x_0- \frac{e^{2V_0t_0}w(t_0,x_0)(\nabla w(t_0,x_0)+q)}{e^{2V_0t_0}|\nabla w(t_0,x_0)+q|^2+\eta(e^{2V_0t_0}|\nabla w(t_0,x_0)+q|^2)}  \Big)  \cdot q  \\
&{\rm(2)}:= \Big\{
v\Big(t_0,x_0- \frac{e^{2V_0t_0}w(t_0,x_0)\nabla w(t_0,x_0)}{e^{2V_0t_0}|\nabla w(t_0,x_0)|^2+\eta(e^{2V_0t_0}|\nabla w(t_0,q_0)|^2)}  \Big) \\
&\qquad -v\Big(t_0,x_0-\frac{e^{2V_0t_0}w(t_0,x_0)(\nabla w(t_0,x_0)+q)}{e^{2V_0t_0}|\nabla w(t_0,x_0)+q|^2+\eta(e^{2V_0t_0}|\nabla w(t_0,x_0)+q|^2)}  \Big) 
\Big\}\cdot \nabla w(t_0,x_0).  
\end{align*}  
By \eqref{G3},  $h'(\cdot)\ge0$ and $\bar{u}(t,x)=e^{\alpha t}w(t,x)$, we obtain  
\begin{align*}
{\rm(1)}&=-v(t_0,x_0)\cdot q +[v(t_0,x_0)\cdot q+(1)]\\
&\le v(t_0,x_0)\cdot q +c_1(T)|w(t_0,x_0)||q|  \\
&=-2h'(\bar{u}(t_0,x_0)^2)\bar{u}(t_0,x_0)v(t_0,x_0)\cdot\nabla \bar{u}(t_0,x_0)\\
&\quad + c_1(T)|w(t_0,x_0)|\times 2h'(\bar{u}(t_0,x_0)^2)|\bar{u}(t_0,x_0)||\nabla\bar{u}(t_0,x_0)|\\
&=-2h'(\bar{u}(t_0,x_0)^2)\bar{u}(t_0,x_0)v(t_0,x_0)\cdot\nabla \bar{u}(t_0,x_0)\\
&\quad +  2h'(\bar{u}(t_0,x_0)^2) |\bar{u}(t_0,x_0)|^2 \times c_1(T)|\nabla w(t_0,x_0)|.  
%
\end{align*}
By \eqref{G4},  $h'(\cdot)\ge0$ and $\bar{u}(t,x)=e^{\alpha t}w(t,x)$, we obtain  
\begin{align*}
|{\rm(2)}|&\le c_2(T)|w(t_0,x_0)||\nabla w(t_0,x_0)||q|\\
&=  2h'(\bar{u}(t_0,x_0)^2)|\bar{u}(t_0,x_0)|^2\times c_2(T)|\nabla w(t_0,x_0)|^2.
\end{align*}
Applying these estimates to the right-hand side of \eqref{33BS}, where the  cancelation of the term $2h'(\bar{u}(t_0,x_0)^2)\bar{u}(t_0,x_0)v(t_0,x_0)\cdot\nabla \bar{u}(t_0,x_0)$ takes place, we find that  
\begin{align*}
0<\lambda
&\le -2h'(\bar{u}(t_0,x_0)^2)|\bar{u}(t_0,x_0)|^2\\
&\quad\times \Big\{ 
\alpha -2V_0 -c_1(T)|\nabla w(t_0,x_0)|
-c_2(T)|\nabla w(t_0,x_0)|^2
\Big\} . 
\end{align*}
The choice of $\alpha$ in \eqref{3alpha} yields a contradiction. 

The case of $\max_{\Gamma_T}(\phi-\tilde{\phi})>0$ can be treated in the same way, and we conclude that $\phi\equiv \tilde{\phi}$ on $\Gamma_T$. Since $T>0$ is arbitrary, we conclude that 
$$\phi\equiv \tilde{\phi}\mbox{ on } \Gamma:=\bigcup_{T\in\N}\Gamma_T,$$
where $\Gamma$ is a tubular neighborhood of $\{\interface(t)\}_{t\ge0}$.

\setcounter{section}{3}
\setcounter{equation}{0}
\section{Conclusion}

We briefly summarize the state of the art concerning rigorous mathematical analysis of  the reinitialization methods (RIM),  the nonlinear modification methods (NMM) and  the velocity extension method (VEM).  
The major purpose of these methods is to  stabilize the gradient of the level set function on the moving interface, avoiding the norm of  the gradient to be close to $0$ or $\infty$, or to obtain the (local) signed distance function of the moving interface.    
In the literature,  the following cases have been established: 
\begin{itemize}
\item[(1)]  RIM for evolutional first-oder Hamilton-Jacobi equations  in the whole space $\R^n$: Hamamuki-Ntovoris \cite{HN}. 
\item[(2)]  NMM for evolutional first-oder Hamilton-Jacobi equations in the whole space $\R^n$: Hamamuki \cite{H}. 
\item[(3)]  NMM for the linear transport equation in a bounded domain of $\R^3$ arising from two-phase flow problems:  Bothe-Fricke-Soga \cite{BFS}. 
\item[(4)]  VEM for the linear transport equation in a bounded domain of $\R^3$ arising from two-phase flow problems:  the present paper. 
\end{itemize}
The investigations of (1) and (2) are done in the class of $C^0$-viscosity solutions, while those of (3) and (4) are done  in the class of $C^2$-solutions as well as in the class of $C^0$-viscosity solutions including regularity analysis of viscosity solutions.  

In (1), infinitely many times reinitialization by (a version of) the corrector equation \eqref{1corr} is described in terms of homogenization of Hamilton-Jacobi equations, where  \eqref{1corr} must be solved for a sufficiently long time; the long-time solution of  \eqref{1corr} is achieved by a scaling limit in the homogenized Hamilton-Jacobi equation, where the limit of the $C^0$-viscosity solution is indeed the signed distance function of the original interface.  The method provides the signed distance function, even from the initial data that  is not the signed distance function  of the initial interface. 

In (2), it is shown that the modified problem with initial data equal to  the signed distance function of the initial interface provides a $C^0$-viscosity solution that stays close to  the signed distance function of the interface in a small neighborhood of the interface; furthermore, the viscosity solution is differentiable on the interface with the norm of the derivative equal to $1$. 
The method can control the gradient on the interface, but cannot provide the signed distance function.   

In (3), it is proven that the modified problem provides a global-in-time/local-in-space $C^2$-solution in a tubular neighborhood of the interface  that preserves the norm of the gradient on the interface, where the initial data does not need to be the (local) signed distance function of the initial interface; furthermore, a global-in-time/global-in-space $C^0$-viscosity solution that coincides with the $C^2$-solution in a tubular neighborhood of the interface  is obtained. 
A simpler modification that does not preserve the norm of the gradient but keeps the norm within an a priori given bound in a small neighborhood of the interface is also presented.  
The nonlinear modification methods can control the gradient either strongly on  the interface or weakly in a small neighborhood of the interface, but they cannot provide the local signed distance function in general. 

In (4), i.e., in the present paper,  it is proven that the method provides a global-in-time/local-in-space $C^2$-solution in a tubular neighborhood of the interface  that preserves  the norm of the gradient not only on the interface but also in a small neighborhood of  the interface, where the initial data does not need to be the local signed distance function of the initial interface; 
 in particular, if initial data is equal to  the local signed distance function of the initial interface, the $C^2$-solution is indeed the local signed distance function; furthermore, a global-in-time/global-in-space $C^0$-viscosity solution that coincides with the $C^2$-solution in a tubular neighborhood of the interface is obtained. 
Unlike RIM,  the method cannot produce the local signed distance function from initial data that is not   the local signed distance function  of the initial interface. 

\medskip\medskip\medskip

\noindent{\bf Acknowledgement.}
Dieter Bothe is supported by the Deutsche Forschungsgemeinschaft (DFG, German
Research Foundation) -- Project-ID 265191195 -- SFB 1194.
Kohei Soga is supported by JSPS Grants-in-Aid for Scientific Research (C) \#22K03391.
\medskip\medskip\medskip

\noindent{\bf Data availability.}
Data sharing not applicable to this article as no datasets were generated or analyzed during the current study.

\medskip

\noindent{\bf Conflicts of interest statement.} The authors state that there is no conflict of interest.
\renewcommand{\theequation}{A.\arabic{equation} }
\appendix
\def\thesection{Appendix}
\section{}
\setcounter{equation}{0}

We prove Lemma \ref{2exten}. 
Define the function $\tilde{v}_i:[0,\infty)\times\R^3\to\R$ as 
$$\tilde{v}_i(t,x):=\inf_{z\in\bar{\Omega}}\{  v_i(t,z)+\lambda|x-z| \}.$$
Due to the classical result (see, e.g., Subsection 3.1.1 in \cite{EG}), it holds that $\tilde{v}_i(t,\cdot)\equiv v_i(t,\cdot)$ on $\bar{\Omega}$ and $\tilde{v}_i(t,\cdot)$ is Lipschitz continuous on $\R^3$ for each fixed $t\ge0$ with a Lipschitz constant $\lambda$.  
Then, we see that continuity of $\tilde{v}_i(\cdot,x)$ with respect to $t$ for each fixed $x\in\R^3$ implies continuity of $\tilde{v}_i$ in $(t,x)$. 
 
 We show continuity of $\tilde{v}_i(\cdot,x)$ with respect to $t$ for each fixed $x\in\R^3$.  Observe that 
\begin{align*}
\tilde{v}_i(t+\delta,x)&= \inf_{z\in\bar{\Omega}}\{  v_i(t+\delta,z)+\lambda|x-z)| \}\\
&= \inf_{z\in\bar{\Omega}}\{  v_i(t,z)+\lambda|x-z|+v_i(t+\delta,z)-v_i(t,z) \}\\
&\le  \inf_{z\in\bar{\Omega}}\Big\{  v_i(t,z)+\lambda|x-z|+\sup_{\tilde{z}\in\bar{\Omega}} |v_i(t+\delta,\tilde{z})-v_i(t,\tilde{z})| \Big\}\\
&=  \inf_{z\in\bar{\Omega}}\{  v_i(t,z)+\lambda|x-z|\} + \sup_{\tilde{z}\in\bar{\Omega}} |v_i(t+\delta,\tilde{z})-v_i(t,\tilde{z})| \\
&=\tilde{v}_i(t,x)+ \sup_{\tilde{z}\in\bar{\Omega}} |v_i(t+\delta,\tilde{z})-v_i(t,\tilde{z})|,
\end{align*}
and similarly,        
 \begin{align*}
 \tilde{v}_i(t+\delta,x)&\ge  \tilde{v}_i(t,x)- \sup_{\tilde{z}\in\bar{\Omega}} |v_i(t+\delta,\tilde{z})-v_i(t,\tilde{z})|. 
\end{align*}
Due to continuity of $v_i$ in $(t,x)$ on $[0,\infty)\times\bar{\Omega}$ with $\bar{\Omega}$ being compact, $v_i$ is uniformly continuous in $I\times\bar{\Omega}$ for each bounded closed interval $I\subset[0,\infty)$. 
Hence, it holds that
\begin{align*}
 \sup_{\tilde{z}\in\bar{\Omega}} \Big|v_i(t+\delta,\tilde{z})-v_i(t,\tilde{z})\Big|\to 0\quad \mbox{as $\delta\to0$}.
 \end{align*}
Thus, we conclude that 
$$\tilde{v}_i(t+\delta,x)-\tilde{v}_i(t,x)\to 0\quad\mbox{ as $\delta\to0$},$$
and complete the proof of Lemma \ref{2exten}. 

  

\begin{thebibliography}{9}
%
\bibitem{AJS} D. Adalsteinsson and J. A. Sethian, The fast construction of extension velocities in level set methods, 
 J. Comput. Phys. {\bf 148} (1999), pp. 2-22. 
%
\bibitem{BFS} D. Bothe, M. Fricke and K. Soga, 
Mathematical analysis of modified level-set equations,   Math. Ann. {\bf 390}  (2024), pp. 4577-4617.
%
\bibitem{CIL} M. G. Crandall, H. Ishii and P. L. Lions, User's guide to viscosity solutions of second order partial differential equations, Bull. Amer. Math. Soc. (N.S.) {\bf 27} (1992), no. 1, pp. 1-67.
%
\bibitem{Evans-book} L. C. Evans, {\it Partial differential equations},  2nd edition, American Mathematical Society (2010).
%
\bibitem{EG}  L. C. Evans and R. F.  Gariepy, {\it Measure theory and fine properties of functions},  CRC Press, (1991).
%
\bibitem{F} M. Fricke,  T. Mari\'c,  A. Vu\v{c}kovi\'c, I. Roisman and D. Bothe, A locally signed-distance preserving level set method (SDPLS) for moving interfaces,
preprint (arXiv:2208.01269v2).
%
\bibitem{H} N. Hamamuki, An improvement of level set equations via approximation of a distance
function,  Applicable Analysis {\bf 98} (2019), no 10, pp. 1901-1915.
%
\bibitem{HN} N. Hamamuki and E. Ntovoris, 
A rigorous setting for the reinitialization of first-order level set equations, Interfaces Free Bound.{\bf 18} (2016), no.4, pp. 579-621.
%
\bibitem{Ishii} H. Ishii,  Perron's method for Hamilton-Jacobi equations, Duke Math. J. {\bf 55} (1987), no. 2, pp. 369-384.
%
\bibitem{pruss} J. Pr\"uss and G. Simonett, {\it Moving interfaces and quasilinear parabolic evolution equations}, Monographs in Mathematics {\bf 105}, Birkh\"auser/Springer (2016).
%
\bibitem{Roisman} I. Roisman,  Implicit surface method for numerical simulations of moving interfaces, 
a talk given at the international workshop on `Transport Processes at Fluidic Interfaces -- from Experimental to Mathematical Analysis', Aachen, Germany, December 2011.
%
\bibitem{JS1} J. A. Sethian, { A fast marching level set method for monotonically advancing fronts}, Proceedings of the National Academy of Sciences {\bf 93} (1996), no. 4, pp. 1591-1595.
%
\bibitem{JS2} J. A. Sethian, {\it Level set methods and fast marching methods:   
Evolving interfaces in computational geometry, fluid mechanics, computer vision, and materials science},  Second edition, Cambridge Monogr. Appl. Comput. Math. 3, Cambridge University Press, Cambridge  (1999). 
%
\bibitem{Ober} J. A. Sethian, personal communication at the Oberwolfach workhop ``Interfaces, Free Boundaries and Geometric Partial Differential Equations'', February 11-16, 2024.
%
\bibitem{Sussman1994} M. Sussman, P. Smereka and S. Osher, { A level set approach for computing solutions to incompressible two-phase flow},  J. Comput. Phys. {\bf 114} (1994), no. 1, pp. 146-159.
%
\bibitem{Sussman1999} M. Sussman and E. Fatemi, { An efficient, interface-preserving level set redistancing algorithm and its application to interfacial incompressible fluid flow}, SIAM Journal on Scientific Computing {\bf 20} (1999), no. 4, pp. 1165-1191.
%
\bibitem{ZCMO} H.-K. Zhao, T. Chen, B. Merriman and S. Osher, A variational level set approach to multiphase motion, J.  Comput. Phys. {\bf127} (1996), pp. 179-195. 
%

\end{thebibliography}
\end{document}